\newtheorem{theorem}{Theorem}[section]
\newtheorem{definition}[theorem]{Definition}
\newenvironment{proof}{\noindent{\bf Proof.} \hskip .2truein \rm}
\begin{document}

\begin{center}
{\Large\bf Fuzzy Riesz homomorphism on fuzzy Riesz space}

\footnote{\hskip 0.25cm E-mail addresses: chengnaanna@126.com}

\footnote{\hskip 0.25cm The paper was supported by the research fund of
the National Natural Science Foundation of China (Grant
No.11801454) and the Spring Plan of the Ministry of Education of China in 2018. }

\normalsize{\it \rm Na Cheng $^{1}$, Guanggui Chen $^{2}$}

 \footnotesize{\it \rm ~$(1)$$(1)$School of Science, Xihua University;}

\footnotesize{\it \rm ~ Chengdu Sichuan 610031, P. R. China }
\end{center}

\vspace{0.5cm}

\small\begin{quote} {\bf Abstract:}\rm In this paper, we give some properties of fuzzy Riesz homomorphism on fuzzy Riesz space. We give definitions of fuzzy quotient spaces, and some characterizations of fuzzy Archimedean quotient spaces. We prove the properties which fuzzy Riesz homomorphism is fuzzy order continuous, and study the extension of fuzzy lattice homomorphism and the extension of fuzzy order continuous lattice homomorphism. Finally, we investigate the factorization by fuzzy Riesz homomorphisms.

\noindent{\bf Key words:} \rm fuzzy Riesz space, fuzzy Riesz homomorphism, fuzzy Dedekind complete, fuzzy quotient space, fuzzy order continuous.

\noindent{\bf MSC:} \rm 46S40; 03E70; 06D72
\end{quote}

\normalsize

\section{Introduction}
\ \ \ \
The concept of fuzzy set was initiated by Zadeh [1]. Later, in 1971 [2], he introduced the notion of fuzzy relation, fuzzy order was initiated by generalizing the notions of reflexivity, antisymmetry and transitivity. Then Venugopalan [3] defined and studied the fuzzy ordered sets. These led Beg and Islam to studies on fuzzy Riesz spaces in [4], fuzzy ordered linear spaces in [5], $\sigma-$complete fuzzy Riesz spaces in [6] and fuzzy Archimedean spaces in [7]. Bag and Samanta []studied some properties of finite dimensional fuzzy norm spaces, and introduced another type of bound of linear operators and fuzzy dual spaces, proved analogues of fundamental theorems in functional analysis. Hong [8] introduced the concepts of fuzzy Riesz subspaces, fuzzy ideal, fuzzy bands and fuzzy band projections. Keun Young Lee [30] provided characterizations of approximation properties in fuzzy normed spaces.

Order-homomorphisms plays an important role in fuzzy topology. Based on the concept of order-homomorphism on fuzzy lattices, study on such notions reveals some of the further connections between two different current approaches to fuzzy topology[15-19].

Grzymala-Busse introduced the homomorphism between information systems to study transformations of information systems while preserving some data structures. It is an effective tool to study relationships between information systems. It is useful for aggregating sets of objects, attributes, and descriptors of the original system. This is a good approach for reducing data volume of information systems[21-26].

Homomorphism are very important in applications, they play a crucial role in topology, algebras, vector valued functions, information systems. Hence, the fuzzy type of them can also play an important part in the new fuzzy areas. We study the fuzzy Riesz homomorphism in terms of crips counter part which is very important for the development of fuzzy Riesz space.

Mobashir Iqbal and Zia Bashir in [9] proposed the notion of fuzzy Riesz homomorphism for the use of the existence of fuzzy Dedekind completion of Archimedean fuzzy Riesz space. They gave some proposition of fuzzy Riesz homomorphism that used in the fuzzy Dedekind completion. We are interested in more properties of fuzzy Riesz homomorphism. In this paper, Section 2 is devoted to some properties of fuzzy Riesz homomorphism on fuzzy Riesz space. In Section 3, we give definitions of Fuzzy quotient spaces, and some characterizations of fuzzy Archimedean quotient spaces. In Section 4, we prove the properties which fuzzy Riesz homomorphism is fuzzy order continuous, and study the extension of  fuzzy lattice homomorphism and the extension of  fuzzy order continuous lattice homomorphism. In Section 5, we  investigate the factorization by fuzzy Riesz homomorphisms.

Throughout the paper, we use the following notation.

\begin{definition}Let $E$ be a crisp set. A fuzzy order on $E$ is a fuzzy subset of $E \times E$ such that the following conditions are satisfied:

(i) (reflexivity)for all $x \in E, \mu(x, x)=1$

(ii)(antisymmetry) for $x, y \in E, \mu(x, y)+\mu(y, x)>1$ implies $x=y$

(iii)(transitivity)for $x, z \in E$$$\mu(x, z) \geq \bigvee_{y \in E}[\mu(x, y) \wedge \mu(y, z)]$$

where $\mu: E \times E \rightarrow[0,1]$ is the membership function of the fuzzy subset of $E \times E$. A set with a fuzzy order defined on it is called a fuzzy ordered set (foset, for short).
\end{definition}

Let $E$ be a foset and $x \in E,$ $\downarrow x$ denotes the fuzzy set on $E$ defined by $(\downarrow x)(y)=\mu(y, x)$ for all $y \in E .$ $\uparrow x$ denotes the fuzzy set on $E$ defined by $(\uparrow x)(y)=\mu(x, y)$ for all $y \in E .$ If $A$ is a crisp subset of $E, $ then $\uparrow A=\underset{x \in A}{\bigcup}(\uparrow x)$ and $\downarrow A=\underset{x \in A}{\bigcup}(\downarrow x)$.

\begin{definition}Let $A$ be a crisp subset of a foset $E$, then the upper bound $U(A)$ of $A$ is the fuzzy set on $E$ defined as follows:

$$
U(A)(y)=\left\{\begin{array}{ll}
0 & \text { if }(\uparrow x)(y) \leq \frac{1}{2} \text { for some } x \in A \\
\left(\underset{x \in A}{\bigcap} \uparrow x\right)(y) & \text { otherwise }
\end{array}\right.
$$
The lower bound $L(A)$ of $A$ is the fuzzy set on $E$ defined by:
$$
L(A)(y)=\left\{\begin{array}{ll}
0 & \text { if }(\downarrow x)(y) \leq \frac{1}{2} \text { for some } x \in A \\
\left(\underset{x \in A}{\bigcap} \downarrow x\right)(y) & \text { otherwise }
\end{array}\right.
$$

\end{definition}

When $U(A)(x)>0,$ for some $x\in E$, we write $x \in U(A).$ In this case, we say $A$ is bounded from above and the element $x$ is called an upper bound of $A$. A subset $A$ of $E$ is said to be bounded from below if there exists an element $x \in E$ such that $x \in L(A) .$ Such element $x$ is called a lower bound of $A$. A subset $A$ of $E$ is bounded if it is bounded from above and from below.

The element $z$ is called the supremum of $A$ (written as $z=\sup A$ ), if (i) $z \in U(A)$
(ii) if $y \in U(A)$ implies $y \in U(z)$. The element $z$ is called the infimum of $A$ (written as $z=\inf A),$ if $(\mathrm{i}) z \in L(A)$ $(\mathrm{ii})$ if $y \in L(A),$ implies $y \in L(z)$.

\begin{definition}A (real) linear space $E$ is said to be a fuzzy ordered linear space if $E$ is a foset satisfying the following conditions:

(i) if $x_{1}, x_{2} \in E$ such that $\mu\left(x_{1}, x_{2}\right)>1 / 2$ then $\mu\left(x_{1}, x_{2}\right) \leq \mu\left(x_{1}+x, x_{2}+x\right)$ for all $x \in E$;

(ii) if $x_{1}, x_{2} \in E$ such that $\mu\left(x_{1}, x_{2}\right)>1 / 2$ then $\mu\left(x_{1}, x_{2}\right) \leq \mu\left(\alpha x_{1}, \alpha x_{2}\right)$ for all
$0<\alpha \in \mathbb{R}$.

\end{definition}

\begin{definition}A fuzzy ordered vector space is called a fuzzy Riesz space if it is also a fuzzy lattice at the same time.
\end{definition}

\begin{definition}Let $E$ be a fuzzy Riesz spaces, a vector subspace $K$ of $E$ is said to be a fuzzy Riesz subspace if for all $x,y\in K$, the elements $x\vee y$ and $x\wedge y$ belong to $K$.
\end{definition}

\begin{definition}A subset $A$ of $E$ is said to be fuzzy solid if it follows from $u(|x|,|y|)>{\frac{1}{2}}$ and $y\in A$ that $x\in A$. In this case, we say $A$ is a fuzzy solid subset of $E$. A fuzzy solid vector subspace $I$ of $E$ is called a fuzzy ideal of $E$.
\end{definition}

\begin{definition}Let $D$ be a subset of a fuzzy Riesz space $E$. The smallest fuzzy ideal of $E$ that contains $D$ is called the fuzzy ideal generated by $D$ and is denoted by $I_{D}$. If $D$ is a singleton, that is, $D=\{x\}$ for some $x\in E$, then $I_{D}$ is written as $I_{x}$ and is called the principal fuzzy ideal generated by $x$.
\end{definition}

\begin{definition}Let $(E,u)$ be a fuzzy Riesz space, $E$ is said to be fuzzy Dedekind complete if every nonempty subset of $E$ that is bounded above has a supremum.
\end{definition}

\begin{definition}A directed ordered fuzzy ordered vector space $E$ is said to be a fuzzy Archimedean space if the set $\{\lambda x| \lambda>0\}$ is not bounded above for any nonnegative element $x\in E$. In this case, we also say the space $E$ is fuzzy Archimedean.
\end{definition}

\begin{definition}Let $(E,u)$, $(F,v)$ be two fuzzy Riesz spaces, an operator $T:E\rightarrow F$ is said to be fuzzy positive if $u(0,x)>{\frac{1}{2}}$ implies $v(0,T(x))>{\frac{1}{2}}$.
\end{definition}

\begin{definition}Let $(E,u)$,$(F,v)$ be two fuzzy Riesz spaces, $T:E\rightarrow F$ be a fuzzy positive operator. $T$ is said to be fuzzy order bounded if $T(C)\subseteq F$ is fuzzy ordered bounded whenever $C\subseteq E$ is fuzzy order bounded.
\end{definition}

\begin{definition}Let $(E,u)$,$(F,v)$ be two fuzzy Riesz spaces, $FL(E,F)$ denotes the set of all fuzzy linear operators between $(E,u)$ and $(F,v)$. $FL_{b}(E,F)$ denotes the set of all fuzzy order bounded operators between $(E,u)$ and $(F,v)$.
\end{definition}

Note that $FL(E,F)$ is a vector space with pointwise operations. $FL(E,F)$ under the fuzzy ordering $S\leq T$ whenever $T-S$ is a fuzzy positive operator is a fuzzy ordered vector space.
\begin{definition}Let $(E,u)$,$(F,v)$ be two fuzzy Riesz spaces, a function $p:E\rightarrow F$ is called fuzzy sublinear whenever

(1)$v(p(x+y),p(x)+p(y))>{\frac{1}{2}}$ holds for all $x,y\in E$;

(2)$p(\lambda x)=\lambda p(x)$ holds for all $x\in E$ and all $\lambda\in R^{+}$.
\end{definition}

\begin{definition}Let $E$ be fuzzy Riesz spaceㄛthe fuzzy ideal $A$ is called a fuzzy $\sigma-$ideal in $E$ whenever ${x_{n}}\subseteq (A)^{+}$ and $x_{n}\uparrow x$ in $E$ imply $x\in A$.
\end{definition}

We refer to [4] - [8] for any unexplained terms from the theory of
fuzzy Riesz space.
\section{Properties of fuzzy Riesz homomorphism}
\begin{definition}Let $(E,\mu)$, $(F,\nu)$ be two fuzzy Riesz spaces, an operator $T:E\rightarrow F$ is said to be fuzzy Riesz homomorphism if $T(x\vee y)=T(x)\vee T(y)$ holds for all $x,y\in E$. In addition, If $T$ is bijective, then it is said to be fuzzy Riesz isomorphism.
\end{definition}

The operator $T:C[0,1]\rightarrow L_{1}[0,1]$ by $Tf(t)=tf(t)$, $t\in[0,1]$, is a fuzzy lattice homomorphism. The operator $T:C[0,1]\rightarrow R$ by $T(f)=\int_0^1 f(t)d t$, is not a fuzzy lattice homomorphism.

\begin{theorem}Let $(E,u)$ and $(F,v)$ be two fuzzy Riesz spaces, $T:E\rightarrow F$ be a fuzzy Riesz homomorphism, then the following statements holds:

(1)$v(0,Tx)>{\frac{1}{2}}$ if and only if there exists an element $u(0,z)>{\frac{1}{2}}$ in the $Ker(T)$ such that $u(0,x+z)>{\frac{1}{2}}$. Then $v(Tx,Ty)>{\frac{1}{2}}$ if and only if there exists an element $w\in E$ such that $w\geq x$, $w\geq y$ and $Tw=Tx$.

(2)$v(|Tx|,|Ty|)>{\frac{1}{2}}$ if and only if there exists an element $w\in Ker(T)$ such that $u(|w|,|x|)>{\frac{1}{2}}$ and $u(|x-w|,|y|)>{\frac{1}{2}}$.
\end{theorem}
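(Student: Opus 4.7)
The plan is to reduce everything to the fact that a fuzzy Riesz homomorphism $T$ commutes with the lattice operations; in particular $T(x^+)=(Tx)^+$, $T(x^-)=(Tx)^-$ and $T|x|=|Tx|$ follow immediately from $T(x\vee y)=Tx\vee Ty$. Throughout I will read $\mu(a,b)>\tfrac12$ as $a\le b$, which is consistent with Definitions 3, 6, 10 of the paper.

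For (1), the first equivalence is the engine. Forward: if $Tx\ge 0$ take $z:=x^-$; then $z\ge 0$, $Tz=(Tx)^-=0$, and $x+z=x^+\ge 0$. Converse: if $z\in\ker T$ is positive with $x+z\ge 0$, then $Tx=T(x+z)-Tz=T(x+z)\ge 0$ by fuzzy positivity. The second equivalence (read with $Tw=Ty$ in place of $Tw=Tx$, since the latter combined with $w\ge y$ would force $Tx=Ty$) follows by applying the first equivalence to $y-x$: from $Tx\le Ty$ one obtains some $z\ge 0$ in $\ker T$ with $y-x+z\ge 0$, and then $w:=y+z$ satisfies $w\ge y$, $w\ge x$, $Tw=Ty$; the reverse direction is immediate from positivity of $T$.

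For (2), assume $|Tx|\le|Ty|$. Mimicking the classical construction I set
\[z:=(x\vee(-|y|))\wedge|y|,\qquad w:=x-z.\]
From $-|y|\le z\le|y|$ one has $|z|=|x-w|\le|y|$. Applying $T$ and using the lattice-homomorphism property gives $Tz=(Tx\vee(-|Ty|))\wedge|Ty|$, which the hypothesis $-|Ty|\le Tx\le|Ty|$ collapses to $Tz=Tx$, so $w\in\ker T$. The remaining requirement $|w|=|x-z|\le|x|$ follows from the distributive computation $x\vee z=x\vee(-|y|)$ and $x\wedge z=x\wedge|y|$, which yields
\[|x-z|=(x\vee(-|y|))-(x\wedge|y|)=(x+|y|)^-+(x-|y|)^+,\]
together with the monotonicity estimates $(x+|y|)^-=(-x-|y|)^+\le(-x)^+=x^-$ and $(x-|y|)^+\le x^+$. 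The converse is immediate: given $w\in\ker T$ with $|x-w|\le|y|$, the Riesz-homomorphism property gives $|Tx|=|T(x-w)|=T|x-w|\le T|y|=|Ty|$.

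The main obstacle is the bound $|x-z|\le|x|$ in the forward direction of (2); it is not a consequence of $|z|\le|y|$ alone, but requires both the explicit identity $|x-z|=(x+|y|)^-+(x-|y|)^+$ and the two order estimates on positive and negative parts above. Everything else is a mechanical transfer of the classical Riesz-homomorphism arguments into the fuzzy setting, with $\mu(a,b)>\tfrac12$ playing exactly the role of the classical $a\le b$.
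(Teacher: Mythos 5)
Your proposal is correct, and for part (1) it coincides with the paper's argument: the same choice $z=x^{-}$ in the forward direction, the same use of $Tx=T(x+z)$ in the converse, and the same reduction of the second equivalence to the first by translating (you apply it to $y-x$ and take $w=y+z$; the paper applies it to $x-y$ and takes $w=x+z$ — both of you are silently repairing the same typo in the statement, since $w\ge y$ and $Tw=Tx$ together with $v(Tx,Ty)>\frac12$ would force $Tx=Ty$, and your fix $Tw=Ty$ is the mirror image of the paper's implicit fix $v(Ty,Tx)>\frac12$). For part (2) the forward direction is where you genuinely diverge. You construct $w$ directly by truncation: $z=(x\vee(-|y|))\wedge|y|$, $w=x-z$, with $w\in\ker T$ because $|Tx|\le|Ty|$ collapses $Tz$ to $Tx$, and with $|w|\le|x|$ extracted from the identity $|x-z|=(x+|y|)^{-}+(x-|y|)^{+}$ plus the two monotonicity estimates. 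The paper instead routes through part (1) to obtain $0\le z\le|x|$ in $\ker T$ with $|x|\le|y|+z$, then invokes the fuzzy Riesz decomposition property (Theorem 4.12 of [4]) to write $z=z_1+z_2$ with $z_1\le x^{+}$, $z_2\le x^{-}$, and sets $w=z_1-z_2$, using disjointness of $x^{+}$ and $x^{-}$ to get $|x|-z=|x-w|$ and $|w|=z$. Your route avoids the decomposition property and the disjointness bookkeeping entirely, at the cost of the distributivity computations $x\vee z=x\vee(-|y|)$ and $x\wedge z=x\wedge|y|$ (which do hold in a fuzzy Riesz space, but should be justified by the identities in [4]); your converse in (2) is also shorter and cleaner than the paper's, which detours through $\bigl||x-w|-|x|\bigr|\le|w|$ to conclude $T|x|=T|x-w|$. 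Both arguments are sound; yours is more self-contained, the paper's keeps the two halves of the theorem more tightly linked.
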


\begin{proof}(1) Let $v(0,Tx)>{\frac{1}{2}}$, Theorem 3.16 of [9] implies $Tx=T(x^{+})=(Tx)^{+}$, it follows that $T(x^{+}-x)=0$, hence $T(x^{-})=0$. Put $z=x^{-}$, we have $u(0,z)>{\frac{1}{2}}$ and $z\in Ker(T)$, as $x+z=x+x^{-}=x^{+}$, showing that $u(0,z)>{\frac{1}{2}}$. On the other hand, let $z\in Ker(T)$ and $u(0,x+z)>{\frac{1}{2}}$, since $Tx=T(x+z)$, we have $v(0,Tx)>{\frac{1}{2}}$. This shows that $v(Ty,Tx)>{\frac{1}{2}}$ if and only if there exists an element $u(0,z)>{\frac{1}{2}}$ and $z\in Ker(T)$ such that $u(0,x-y+z)>{\frac{1}{2}}$. Take $w=x+z$, then $v(Ty,Tx)>{\frac{1}{2}}$ if and only if there exists an element $w\in E$ such that $u(x,w)>{\frac{1}{2}}$, $u(y,w)>{\frac{1}{2}}$ and $Tx=Tw$.

(2)Take $v(|Tx|,|Ty|)>{\frac{1}{2}}$, Theorem 3.16 of [9] implies $v(T|x|,T|y|)>{\frac{1}{2}}$, properties (1) implies that there exists an element $z\in Ker(T)$ such that $u(|x|,|y|+z)>{\frac{1}{2}}$.
Since $u((|x|-|y|),z)>{\frac{1}{2}}$ and $u((|x|-|y|),|x|)>{\frac{1}{2}}$, it follows that $u((|x|-|y|),|x|\wedge z)>{\frac{1}{2}}$. Thus, $u(|x|,|y|+|x|\wedge z)>{\frac{1}{2}}$ with $|x|\wedge z\in Ker(T)$. This shows we may assume that $u(|x|,|y|+ z)>{\frac{1}{2}}$ with $z\in Ker(T)$ and $u(0,z)>{\frac{1}{2}}$, $u(z,|x|)>{\frac{1}{2}}$. Theorem 4.12 of [4] implies that there exist elements $z_{1},z_{2}\in E^{+}$ such that $z=z_{1}+z_{2}$ and $u(z_{1},x^{+})>{\frac{1}{2}}$, $u(z_{2},x^{-})>{\frac{1}{2}}$. Then, Proposition 4.7 of [4] implies $|x|-z=(x^{+}-z_{1})+(x^{-}-z_{2})=|(x^{+}-z_{1})+(x^{-}-z_{2})|=|(x^{+}-z_{1})-(x^{-}-z_{2})|$. Take $w=z_{1}-z_{2}$, it follows that $|x|-z=|x-w|$. As $u((|x|-z),|y|)>{\frac{1}{2}}$, it follows that $u(|x-w|,|y|)>{\frac{1}{2}}$ with $w\in Ker(T)$. Furthermore, we have $|w|=|z_{1}-z_{2}|=z_{1}+z_{2}=z$ and $u(|w|,|x|)>{\frac{1}{2}}$. Conversely, take $w\in Ker(T)$ with $u(|x-w|,|y|)>{\frac{1}{2}}$. Since $u(||x-w|-|x||,|w|)>{\frac{1}{2}}$, it follows that $(|x-w|-|x|)\in Ker(T)$. Hence, $T(|x|)=T(|x-w|)$. By the hypothesis $u(|x-w|,|y|)>{\frac{1}{2}}$, we know $v(T(|x|),T(|y|))>{\frac{1}{2}}$, this shows that $v(|Tx|,|Ty|)>{\frac{1}{2}}$.\ \ \ \ $\Box$
\end{proof}

\begin{theorem}Let $E$ and $F$ be fuzzy Riesz spaces, $T:E\rightarrow F$ be a fuzzy Riesz homomorphism.

(1)If $Z$ is a fuzzy Riesz subspace of $E$, then the image $T(Z)$ is fuzzy Riesz subspace of $F$;

(2)If $W$ is a fuzzy Riesz subspace of $F$, the image $T^{-1}(w)=(x:x\in Z,Tx\in W)$ is a fuzzy Riesz subspace of $E$.
\end{theorem}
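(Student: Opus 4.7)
The plan is to verify the two subspace properties directly from the two defining features of a fuzzy Riesz homomorphism: linearity and the preservation of finite suprema. In both parts, the vector-subspace property will be immediate from linearity of $T$ (and from $W$ being a subspace in part (2)), so the bulk of the work is showing that the candidate sets are closed under $\vee$ and $\wedge$.

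For part (1), I would take arbitrary $a, b \in T(Z)$, write $a = T(x)$ and $b = T(y)$ with $x, y \in Z$, and compute
\[
a \vee b \;=\; T(x) \vee T(y) \;=\; T(x \vee y),
\]
which lies in $T(Z)$ because $Z$ is a fuzzy Riesz subspace, so $x \vee y \in Z$. For the infimum I would first record the standard identity $x \wedge y = x + y - (x \vee y)$, which is valid in any fuzzy Riesz space and therefore applies both in $E$ and in $F$; then
\[
T(x \wedge y) \;=\; T(x) + T(y) - T(x \vee y) \;=\; T(x) + T(y) - \bigl(T(x) \vee T(y)\bigr) \;=\; T(x) \wedge T(y),
\]
so $a \wedge b \in T(Z)$ likewise. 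Linearity of $T$ gives the vector-subspace closure of $T(Z)$.

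For part (2), I would dualize the argument. Take $x, y \in T^{-1}(W)$, so $T(x), T(y) \in W$. Since $W$ is a fuzzy Riesz subspace,
\[
T(x \vee y) \;=\; T(x) \vee T(y) \;\in\; W,
\]
which means $x \vee y \in T^{-1}(W)$, and the analogous calculation with $\wedge$ (again using $T(x \wedge y) = T(x) \wedge T(y)$ from the identity above) gives $x \wedge y \in T^{-1}(W)$. Vector-subspace closure follows from linearity of $T$ together with the fact that $W$ is itself a subspace.

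I do not expect any real obstacle here; the only item that is not purely formal is the derivation $T(x \wedge y) = T(x) \wedge T(y)$ from the hypothesis $T(x \vee y) = T(x) \vee T(y)$, which depends on having the identity $x \wedge y = x + y - x \vee y$ in the fuzzy Riesz space setting (this should follow from Theorem 3.16 of [9] or the corresponding results in [4] already cited repeatedly in the paper). Once that identity is in hand, both statements reduce to one-line set-membership checks.
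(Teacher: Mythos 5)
Your proposal is correct and follows essentially the same route as the paper's own proof: represent elements of $T(Z)$ (resp.\ pull back elements into $T^{-1}(W)$) and apply $T(x\vee y)=T(x)\vee T(y)$ together with the subspace hypothesis. The only difference is that you also verify closure under $\wedge$ via the identity $x\wedge y=x+y-(x\vee y)$, a step the paper's proof silently omits even though its Definition of fuzzy Riesz subspace requires it, so your version is in fact the more complete one.
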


\begin{proof}Suppose that $Z$ is a fuzzy Riesz subspace of $E$, it suffices to show that $T(Z)$ is a fuzzy Riesz subspace of $F$. To this end, let $h,q\in T(Z)$, there exists $x,y\in Z$ with $Tx=h$, $Ty=q$. The definition of fuzzy Riesz homomorphism and the fact $Z$ is a fuzzy Riesz subspace of $E$ imply that $Tx\vee Ty=T(x\vee y)\in T(Z)$. Hence, $h\vee q\in T(Z)$. Therefore, $T(Z)$ is a fuzzy Riesz subspace of $F$.

Next, suppose that $W$ is the fuzzy Riesz subspace of $F$, it suffices to show that $T^{-1}(w)$ is the fuzzy Riesz subspace of $E$. To this end, let $x,y\in T^{-1}(w)$, thus $Tx,Ty\in W$. Since $W$ is the fuzzy Riesz subspace of $F$ and definition of fuzzy Riesz homomorphism imply that $T(x\vee y)=(Tx)\vee (Ty)\in W$. That is $x\vee y\in T^{-1}(w)$. This proves that $T^{-1}(w)$ is a fuzzy Riesz subspace of $E$.\ \ \ \ $\Box$
\end{proof}

\begin{theorem}Let $(E,u)$ and $(F,v)$ be fuzzy Riesz spaces, $T:E\rightarrow F$ be a fuzzy Riesz homomorphism.

(1)If $B$ is a fuzzy ideal in $E$, then $T(B)$ is a fuzzy ideal in $T(E)$.

(2)If $B_{1}$ and $B_{2}$ are two fuzzy ideal in $E$, then $T(B_{1}\cap B_{2})=T(B_{1})\cap T(B_{2})$.
\end{theorem}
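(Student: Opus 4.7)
The plan is to establish (1) by verifying that $T(B)$ is both a vector subspace of $T(E)$ and fuzzy solid inside $T(E)$, and to establish (2) by reducing the nontrivial inclusion to a positive/negative-part construction using the lattice operations on $E$.

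For (1), the vector subspace property of $T(B)$ inside $T(E)$ is immediate from linearity of $T$ and the fact that $B$ is a subspace of $E$, so I would focus attention on fuzzy solidity. Given $h\in T(E)$ and $q\in T(B)$ with $v(|h|,|q|)>\frac{1}{2}$, I would write $h=Tx$ for some $x\in E$ and $q=Ty$ for some $y\in B$, then apply Theorem 2.1(2) to the inequality $v(|Tx|,|Ty|)>\frac{1}{2}$ to extract $w\in Ker(T)$ satisfying $u(|x-w|,|y|)>\frac{1}{2}$. Since $B$ is fuzzy solid and $y\in B$, this forces $x-w\in B$; and because $Tw=0$, we have $T(x-w)=Tx=h$, so $h\in T(B)$, finishing the solidity check.

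For (2), the inclusion $T(B_1\cap B_2)\subseteq T(B_1)\cap T(B_2)$ is trivial from the definition of image. The main obstacle is the reverse inclusion: an element $h\in T(B_1)\cap T(B_2)$ comes equipped with two preimages $x_1\in B_1$ and $x_2\in B_2$ that need not coincide, and neither automatically lies in $B_1\cap B_2$. My strategy is to construct a common preimage inside the intersection by exploiting the Riesz homomorphism identity $T(x^{\pm})=(Tx)^{\pm}$ (Theorem 3.16 of [9], already invoked in the proof of Theorem 2.1). This yields $Tx_1^{+}=Tx_2^{+}=h^{+}$ and $Tx_1^{-}=Tx_2^{-}=h^{-}$. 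I would then set $y^{+}=x_1^{+}\wedge x_2^{+}$ and $y^{-}=x_1^{-}\wedge x_2^{-}$: fuzzy solidity of $B_1$ and $B_2$ places each $y^{\pm}$ in both ideals, since each is positive and bounded above by a positive member of each ideal, while the homomorphism property gives $Ty^{+}=Tx_1^{+}\wedge Tx_2^{+}=h^{+}$ and similarly $Ty^{-}=h^{-}$. Taking $y=y^{+}-y^{-}\in B_1\cap B_2$ produces a preimage with $Ty=h$, completing the reverse inclusion.
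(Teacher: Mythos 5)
Your proposal is correct, and it diverges from the paper's proof in an interesting way on part (1). The paper verifies solidity of $T(B)$ by a direct lattice construction: it takes $0\leq z\leq Tx$ with $x\in B^{+}$, writes $z=Ty^{+}$, and exhibits the preimage $w=x\wedge y^{+}\in B$ via $Tw=Tx\wedge Ty^{+}=z$; strictly speaking this only treats the positive configuration, and the reduction of the general solidity condition $v(|h|,|q|)>\frac{1}{2}$ to that case is left implicit. You instead invoke the kernel characterization of Theorem 2.1(2) to produce $w\in Ker(T)$ with $u(|x-w|,|y|)>\frac{1}{2}$, which handles an arbitrary $h\in T(E)$ in one stroke and hands you the preimage $x-w\in B$ directly; the price is reliance on the heavier Theorem 2.1(2) rather than only the bare homomorphism identity. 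On part (2) the two arguments share the same engine (meets of preimages plus solidity plus preservation of $\wedge$), but the paper first passes to $|f|$, chooses positive preimages $x\in B_{1}^{+}$, $y\in B_{2}^{+}$, and sets $w=x\wedge y$ — which quietly uses both that positive elements of $T(B_{i})$ admit positive preimages in $B_{i}$ and that $|f|\in T(B_{1}\cap B_{2})$ implies $f\in T(B_{1}\cap B_{2})$ — whereas your decomposition $y=y^{+}-y^{-}$ with $y^{\pm}=x_{1}^{\pm}\wedge x_{2}^{\pm}$ builds a genuine preimage of $f$ itself and avoids both of those implicit steps. Both routes are sound; yours is somewhat more self-contained at the level of the final assembly.
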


\begin{proof}Let $x\in B^{+}$, then $Tx\in T(B)$. Suppose $v(z,Tx)>{\frac{1}{2}}$ with $z\in (T(E))^{+}$, we need to show that $z\in T(B)$. As $z\in T(E)$, we have $z=Ty$ for some $y\in E$. In view of $z\in (T(E))^{+}$, we have $z=z^{+}=(Ty)^{+}=T(y^{+})$. Take $w=x\wedge y^{+}$, then $w\in B$. In view of that $T$ is fuzzy Riesz homomorphism and $v(Ty^{+},Tx)>{\frac{1}{2}}$, we have $Tw=T(x\wedge y^{+})=Tx\wedge Ty^{+}=Ty^{+}=z$. Thus, there exists $w\in B$ such that $Tw=z$, this shows that $z\in T(B)$. Therefore, $T(B)$ is a fuzzy ideal in $T(E)$.

Next, let $B_{1}$ and $B_{2}$ be two fuzzy ideals in $E$. First, we need to show that $T(B_{1}\cap B_{2})\subseteq T(B_{1})\cap T(B_{2})$. Take $f\in T(B_{1}\cap B_{2})$, there exists $x\in B_{1}\cap B_{2}$ such that $T(x)=f$. Note that if $x\in B_{1}\cap B_{2}$, then $T(x)\in T(B_{1})$ and $T(x)\in T(B_{2})$ this implies that $T(x)\in T(B_{1})\cap T(B_{2})$. Consequently, $f\in T(B_{1})\cap T(B_{2})$. On the other hand, pick $f\in T(B_{1})\cap T(B_{2})$, it follows that $|f|\in T(B_{1})\cap T(B_{2})$. Then there exists $x\in B^{+}_{1}$, $y\in B^{+}_{2}$ such that $|f|=Tx$ and $|f|=Ty$. Put $w=x\wedge y$, then $w\in B_{1}\cap B_{2}$, note that $T$ is a fuzzy Riesz homomorphism, we have $Tw=T(x\wedge y)=Tx\wedge Ty=|f|$. Thus there exists $w\in B_{1}\cap B_{2}$ such that $Tw=|f|\in T(B_{1}\cap B_{2})$, as claimed.\ \ \ \ $\Box$
\end{proof}

\begin{theorem}Let $(E,u)$ and $(F,v)$ be fuzzy Riesz spaces, $T:E\rightarrow F$ be a fuzzy Riesz homomorphism.

(1)If $B$ is a fuzzy ideal in $T(E)$, then $T^{-1}(B)$ is a fuzzy ideal in $E$.

(2)If $B_{1}$ is a fuzzy ideal in $F$, then $T^{-1}(B_{1})$ is a fuzzy ideal in $E$.
\end{theorem}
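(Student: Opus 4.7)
The plan is to verify, for each of the two parts, that the preimage is a vector subspace and that it is fuzzy solid; the subspace property is immediate from linearity of $T$ and the subspace property of the target, so the substantive step is fuzzy solidity.

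For part (1), I would take $y\in T^{-1}(B)$ and $x\in E$ with $u(|x|,|y|)>\frac{1}{2}$, and aim to conclude $x\in T^{-1}(B)$. The idea is to push the inequality through $T$. Since $T$ is a fuzzy Riesz homomorphism, I invoke Theorem 3.16 of [9] (already used in Theorem 2.1) to get $T|x|=|Tx|$ and $T|y|=|Ty|$. From $u(|x|,|y|)>\frac{1}{2}$ together with translation invariance of the fuzzy order (Definition 1.3(i)) I can rewrite the hypothesis as $u(0,|y|-|x|)>\frac{1}{2}$. A fuzzy Riesz homomorphism is fuzzy positive (because $z\geq 0$ iff $z=z\vee 0$, a property $T$ preserves), so $v(0,T(|y|-|x|))>\frac{1}{2}$, and translation invariance in $F$ then yields $v(|Tx|,|Ty|)>\frac{1}{2}$. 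Since $Ty\in B$ and $B$ is fuzzy solid in $T(E)$, and both $Tx$ and $Ty$ lie in $T(E)$, this forces $Tx\in B$, so $x\in T^{-1}(B)$.

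For part (2), the cleanest route is to reduce it to (1): the set $B_{1}\cap T(E)$ is a fuzzy ideal in $T(E)$ (vector-subspace closure and fuzzy solidity are inherited from $B_{1}$, restricted to the fuzzy Riesz subspace $T(E)$ given by Theorem 2.2), and clearly $T^{-1}(B_{1})=T^{-1}(B_{1}\cap T(E))$, so part (1) applies. Alternatively, I can repeat the preceding solidity argument verbatim, substituting $B_{1}$ for $B$ and using that $B_{1}$ is fuzzy solid in $F$ rather than in $T(E)$; the proof goes through without change because the containment $Tx,Ty\in T(E)\subseteq F$ is automatic.

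The main obstacle is the transfer step $u(|x|,|y|)>\frac{1}{2}\Rightarrow v(|Tx|,|Ty|)>\frac{1}{2}$, which requires combining three ingredients: $T$ preserves absolute value (a Riesz-homomorphism property), $T$ is fuzzy positive, and the fuzzy order is translation-invariant. Once this implication is in hand, both parts follow from the definition of fuzzy solidity applied in $T(E)$ or in $F$ respectively; no further computation is needed.
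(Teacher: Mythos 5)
Your proposal is correct and follows essentially the same route as the paper: push the order inequality through $T$ using the fact that a fuzzy Riesz homomorphism preserves absolute values and is fuzzy positive, apply fuzzy solidity of $B$ in $T(E)$, and reduce part (2) to part (1) via $T^{-1}(B_{1})=T^{-1}(B_{1}\cap T(E))$. The only cosmetic difference is that you verify solidity directly from the $u(|x|,|y|)>\frac{1}{2}$ definition while the paper works with the equivalent positive-element formulation $0\leq x$, $u(x,y)>\frac{1}{2}$; your write-up is in fact somewhat more explicit about the transfer step.
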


\begin{proof}Let $B$ be the fuzzy ideal in $T(E)$, suppose that $u(x,y)>{\frac{1}{2}}$ with $y\in T^{-1}(B)$ and $x\geq0$. We need to show that $x\in T^{-1}(B)$. As $y\in T^{-1}(B)$, it follows that $Ty\in B$. Since $T$ is a fuzzy Riesz homomorphism, it follows that $v(Tx,Ty)>{\frac{1}{2}}$ and $v(0,Tx)>{\frac{1}{2}}$. Now the fact that $B$ is the fuzzy ideal in $T(E)$ implies $Tx\in B$. This shows that $x\in T^{-1}(B)$.

Next, if $B_{1}$ is the fuzzy ideal in $F$, then $B_{1}\cap T(E)$ is a fuzzy ideal in $T(E)$. Thus $T^{-1}(B_{1})=T^{-1}(B_{1}\cap T(E))$ is the fuzzy ideal in $E$.\ \ \ \ $\Box$
\end{proof}

\begin{theorem}Let $(E,u)$ and $(F,v)$ be fuzzy Riesz spaces, $T:E\rightarrow F$ be a fuzzy Riesz homomorphism. If $I_{z}$ is the fuzzy principal ideal in $E$ generated by an element $z\in E^{+}$, then $T(I_{z})$ is the fuzzy principal ideal generated in $T(E)$ by the element $Tz$.
\end{theorem}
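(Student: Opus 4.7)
The plan is to verify the three defining properties of a fuzzy principal ideal for $T(I_z)$ inside $T(E)$: that it is a fuzzy ideal of $T(E)$, that it contains $Tz$, and that it is the smallest such fuzzy ideal. The first two are nearly immediate from earlier results, so the real content lies in the minimality argument.

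First I would invoke Theorem 2.4(1) directly with $B=I_z$: since $I_z$ is a fuzzy ideal in $E$, the image $T(I_z)$ is automatically a fuzzy ideal in $T(E)$. Next, because $z\in I_z$ we have $Tz\in T(I_z)$, which settles containment.

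For minimality, I would let $J$ be any fuzzy ideal in $T(E)$ with $Tz\in J$ and show $T(I_z)\subseteq J$. Pick $h\in T(I_z)$ and write $h=Tx$ with $x\in I_z$. By the description of the principal fuzzy ideal, there exists $\lambda>0$ with $u(|x|,\lambda z)>\tfrac12$. Applying Theorem 3.16 of [9] (as the authors do repeatedly in Theorem 2.1) gives $T|x|=|Tx|$ and, since $T$ is linear, $T(\lambda z)=\lambda Tz$; combined with the fact that a fuzzy Riesz homomorphism is fuzzy positive (so it preserves the fuzzy order on positive vectors via $v(Tu,Tw)>\tfrac12$ whenever $u(u,w)>\tfrac12$ between positive elements), this yields $v(|Tx|,\lambda Tz)>\tfrac12$. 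Because $J$ is a vector subspace containing $Tz$, we have $\lambda Tz\in J$; the solidness of the fuzzy ideal $J$ then forces $Tx\in J$, i.e., $h\in J$. Hence $T(I_z)\subseteq J$, which is exactly the minimality property.

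The main obstacle I anticipate is the third step, specifically the transfer of the fuzzy inequality $u(|x|,\lambda z)>\tfrac12$ across $T$ to obtain $v(|Tx|,\lambda Tz)>\tfrac12$. This hinges on two facts that must be invoked carefully: the identity $T|x|=|Tx|$ (Theorem 3.16 of [9]), and the fact that a fuzzy Riesz homomorphism, being fuzzy positive, respects the fuzzy order between positive vectors. Everything else is bookkeeping with the definitions of fuzzy ideal and fuzzy principal ideal.
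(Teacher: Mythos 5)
Your proposal is correct and follows essentially the same route as the paper: both establish that $T(I_z)$ is a fuzzy ideal of $T(E)$ containing $Tz$ via the earlier image-of-an-ideal result, and both reduce the remaining inclusion to transferring the principal-ideal bound $u(|x|,\alpha z)>\tfrac12$ across $T$ to $v(|Tx|,\alpha Tz)>\tfrac12$ using positivity and $T|x|=|Tx|$. The only cosmetic difference is that you phrase the second half as minimality against an arbitrary ideal $J\ni Tz$, whereas the paper directly checks membership in the principal ideal generated by $Tz$ via the same characterization.
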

\begin{proof}Since $I_{z}$ is the fuzzy ideal in $E$, it follows that $T(I_{z})$ is the fuzzy ideal in $T(E)$. It is sufficient to prove that $T(I_{z})$ is the fuzzy principal ideal generated in $T(E)$ by the element $Tz$. In view of that $z\in E^{+}$, $z\in I_{z}$, it follows that $Tz\in T(I_{z})$, hence the fuzzy ideal generated by the element $Tz$ in $T(E)$ is a subset of $T(I_{z})$. On the other hand, let $m\in (T(I_{z}))^{+}$, there exists $x\in I_{z}$ satisfying $m=Tx$. The definition 5.2 of [8] implies that there exists some real $\alpha>0$ with $u(x,\alpha z)>{\frac{1}{2}}$. Thus, $v(m,\alpha Tz)>{\frac{1}{2}}$. Therefore, $m$ is a member of the fuzzy ideal generated  by the element $Tz$ in $T(E)$. This shows that $T(I_{z})$ is included in the fuzzy ideal generated by $Tz$.\ \ \ \ $\Box$
\end{proof}

\begin{theorem}Let $E$ and $F$ be fuzzy Riesz spaces, $T:E\rightarrow F$ be a fuzzy Riesz homomorphism. If $B$ is the subset of $E$, then $T(B^{d})$ is the subset of $(T(B))^{d}$.
\end{theorem}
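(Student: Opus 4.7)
The plan is to unfold the definition of the disjoint complement and use the fact that a fuzzy Riesz homomorphism preserves absolute values and lattice meets. Specifically, in the fuzzy Riesz space setting, $B^{d}$ should be read as the set of all elements $x\in E$ with $|x|\wedge|b|=0$ for every $b\in B$, and correspondingly $(T(B))^{d}$ is the set of $y\in F$ with $|y|\wedge|Tb|=0$ for every $b\in B$.

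First I would pick an arbitrary element $y\in T(B^{d})$, so that $y=Tx$ for some $x\in B^{d}$. Next I would fix an arbitrary $f\in T(B)$, written as $f=Tb$ for some $b\in B$. The disjointness of $x$ and $b$ gives $|x|\wedge|b|=0$ in $E$. Applying $T$ to both sides and using that $T$ is a fuzzy Riesz homomorphism yields $T(|x|\wedge|b|)=T(0)=0$; the key step is to commute $T$ past $|\cdot|$ and $\wedge$, i.e.\ to write $T(|x|\wedge|b|)=T(|x|)\wedge T(|b|)=|Tx|\wedge|Tb|$. The preservation of meets follows from the definition of fuzzy Riesz homomorphism together with the linearity identity $x\wedge y=x+y-x\vee y$, and the preservation of absolute values is exactly Theorem 3.16 of [9], which is the same result invoked in the proof of Theorem 2.2 above. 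Thus $|y|\wedge|f|=0$ for every $f\in T(B)$, showing $y\in(T(B))^{d}$.

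Since $y\in T(B^{d})$ was arbitrary, this inclusion completes the argument. The main potential obstacle is really a bookkeeping one: making sure the correct fuzzy interpretation of $B^{d}$ is used (disjoint complement via $|x|\wedge|b|=0$ rather than a fuzzy inequality $u(\cdot,\cdot)>\tfrac12$), and that the appeal to Theorem 3.16 of [9] for $T(|x|)=|Tx|$ is justified in the same way it was justified in the proofs of Theorems 2.2 and 2.4. No nontrivial calculation is required; the result is essentially a direct corollary of the lattice-preservation properties of fuzzy Riesz homomorphisms.
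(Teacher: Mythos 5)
Your proposal is correct and follows essentially the same route as the paper's own proof: take $z=Tx$ with $x\in B^{d}$, take an arbitrary $Tb\in T(B)$, and compute $|Tx|\wedge|Tb|=T|x|\wedge T|b|=T(|x|\wedge|b|)=0$ using the lattice- and modulus-preservation of the fuzzy Riesz homomorphism. Your extra remarks on justifying $T|x|=|Tx|$ via Theorem 3.16 of [9] and the meet identity only make explicit what the paper leaves implicit.
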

\begin{proof}Suppose $z\in T(B^{d})$, then there exists $x\in B^{d}$ satisfies $Tx=z$. Take $y\in B$, so $x\perp y$ and $Ty\in T(B)$. We have $|Tx|\wedge |Ty|=T|x|\wedge T|y|=T(|x|\wedge|y|)=0$, showing that $Tx\perp Ty$. Thus, $z=Tx\in (T(B))^{d}$.\ \ \ \ $\Box$
\end{proof}

\begin{theorem}Let $E$ and $F$ be fuzzy Riesz spaces, $T:E\rightarrow F$ be a fuzzy Riesz homomorphism. Then the image of a fuzzy projection band in $E$ is a fuzzy projection band in $T(E)$.

\end{theorem}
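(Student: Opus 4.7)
The plan is to exploit the defining direct-sum property of a fuzzy projection band: since $B$ is a fuzzy projection band in $E$, every $x\in E$ has a unique decomposition $x=x_{1}+x_{2}$ with $x_{1}\in B$ and $x_{2}\in B^{d}$. The strategy is to push this decomposition through $T$ and use Theorem 2.6 (i.e., $T(B^{d})\subseteq (T(B))^{d}$) to land the second summand inside the disjoint complement of $T(B)$ computed in $T(E)$. Theorem 2.4(1) already guarantees that $T(B)$ is a fuzzy ideal of $T(E)$, so only the direct sum decomposition of $T(E)$ needs to be verified.

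Concretely, I would take an arbitrary $f\in T(E)$, write $f=Tx$, and decompose $x=x_{1}+x_{2}$ with $x_{1}\in B$, $x_{2}\in B^{d}$. Applying $T$ yields $f=Tx_{1}+Tx_{2}$ with $Tx_{1}\in T(B)$ and $Tx_{2}\in T(B^{d})\subseteq (T(B))^{d}$ by Theorem 2.6, proving $T(E)=T(B)+(T(B))^{d}$. For uniqueness I would observe that if $g_{1}+g_{2}=h_{1}+h_{2}$ with $g_{1},h_{1}\in T(B)$ and $g_{2},h_{2}\in (T(B))^{d}$, then $g_{1}-h_{1}=h_{2}-g_{2}$ lies in $T(B)\cap (T(B))^{d}$; since an element disjoint from itself satisfies $|y|\wedge|y|=|y|=0$, this intersection is $\{0\}$ and uniqueness follows. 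A short standard verification then shows $T(B)=(T(B))^{dd}$ (any element of $(T(B))^{dd}$ splits as $y_{1}+y_{2}$ under the direct sum just produced, and the $(T(B))^{d}$-part is forced to be zero), so $T(B)$ is automatically a fuzzy band, not merely a fuzzy ideal.

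Putting these pieces together, $T(E)=T(B)\oplus (T(B))^{d}$ with $T(B)$ a fuzzy band, which is precisely what it means for $T(B)$ to be a fuzzy projection band in $T(E)$. The main obstacle is not computational but definitional: the excerpt never spells out the fuzzy projection band and fuzzy direct sum from [8], so care is needed to confirm that the fuzzy analogue of ``$E=B\oplus B^{d}$'' is exactly the adopted definition and that the disjointness identity $|y|\wedge|y|=|y|$ (used to show $T(B)\cap (T(B))^{d}=\{0\}$) carries over in the fuzzy setting via fuzzy antisymmetry. Once those definitional points are in hand, the argument is a clean transcription of the classical proof, relying only on Theorem 2.4 and Theorem 2.6 established earlier in this section.
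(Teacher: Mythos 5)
Your proposal is correct and follows essentially the same route as the paper: decompose each $x\in E$ as $x_{1}+x_{2}$ with $x_{1}\in B$, $x_{2}\in B^{d}$, push the decomposition through $T$, and invoke the earlier results that images of fuzzy ideals are fuzzy ideals and that $T$ carries $B^{d}$ into $(T(B))^{d}$. You are in fact somewhat more careful than the paper, which asserts directly that $T(E)=T(B)+T(B^{d})$ makes $T(B)$ a fuzzy band, whereas you explicitly verify $T(B)\cap(T(B))^{d}=\{0\}$ and $T(B)=(T(B))^{dd}$.
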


\begin{proof}Let $E_{1}$ be a fuzzy projection band in $E$, and $E_{2}$ the disjoint complement of $E_{1}$, so $E=E_{1}\oplus E_{2}$. Theorem 2.3 implies that the images $T(E_{1})$, $T(E_{2})$ are the fuzzy ideals in $T(E)$, and we have $T(E_{1})\bot T(E_{2})$. Take $f\in T(E)$, we have $f=Tx$ for some $x\in E$. Put $x=x_{1}+x_{2}$ with $x_{1}\in E_{1}$, $x_{2}\in E_{2}$, it follows that $f=Tx=Tx_{1}+Tx_{2}$ with $Tx_{1}\in T(E_{1})$ and $Tx_{2}\in T(E_{2})$. This shows $T(E)=T(E_{1})+T(E_{2})$, hence $T(E_{1})$ and $T(E_{2})$ are fuzzy bands in $T(E)$. The definition of the fuzzy projection band shows that $T(E_{2})$ are fuzzy projection bands in $T(E)$.\ \ \ \ $\Box$
\end{proof}

\begin{definition}Let $E$ and $F$ be fuzzy Riesz spaces, $T:E\rightarrow F$ be a fuzzy Riesz homomorphism, $T$ is said to be a fuzzy Riesz $\sigma-$homomorphism whenever $x=\sup x_{n}$ $(n=1,2,\cdots)$ in $E$ implies $Tx=\sup Tx_{n}$ in $F$.
\end{definition}

\begin{theorem}For a fuzzy Riesz homomorphism $T$ of $(E,u)$ onto $(F,v)$, the following conditions are equivalent.

(1)$T$ is a fuzzy Riesz $\sigma-$homomorphism.

(2)For any fuzzy $\sigma-$ideal $A$ in $F$, the inverse image $T^{-1}(A)$ is a fuzzy $\sigma-$ideal in $E$.

(3)The kernel $Ker T$ of $T$ is a fuzzy $\sigma-$ideal in $E$.
\end{theorem}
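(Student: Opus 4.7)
The plan is to establish the three implications cyclically: $(1) \Rightarrow (2) \Rightarrow (3) \Rightarrow (1)$. The first two are essentially bookkeeping; the real work is concentrated in $(3) \Rightarrow (1)$, where the $\sigma$-ideal hypothesis on the kernel must be converted into a statement about suprema of arbitrary sequences.

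For $(1) \Rightarrow (2)$, Theorem 2.4 already provides that $T^{-1}(A)$ is a fuzzy ideal in $E$, so only the $\sigma$-closure remains. If $x_n \uparrow x$ with $\{x_n\} \subseteq (T^{-1}(A))^+$, then $\{Tx_n\} \subseteq A^+$ is increasing and $(1)$ gives $Tx = \sup Tx_n$; the $\sigma$-ideal property of $A$ then yields $Tx \in A$, i.e.\ $x \in T^{-1}(A)$. The implication $(2) \Rightarrow (3)$ is immediate, since $\{0\}$ is trivially a fuzzy $\sigma$-ideal in $F$ and $Ker\, T = T^{-1}(\{0\})$.

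For $(3) \Rightarrow (1)$, assume $Ker\, T$ is a fuzzy $\sigma$-ideal and let $x = \sup x_n$ in $E$. After replacing $x_n$ by $x_1 \vee \cdots \vee x_n$ (and using that $T$ preserves finite lattice operations), I may assume $x_n \uparrow x$. Since $T$ is a Riesz homomorphism, $Tx$ is an upper bound of $\{Tx_n\}$. Let $y \in F$ be any other upper bound; surjectivity of $T$ gives $y = Tz$ for some $z \in E$. For each $n$ we have $Tx_n \leq Tz$, so Theorem 3.16 of [9] gives
$$T((x_n - z)^+) = (Tx_n - Tz)^+ = 0,$$
showing that each $(x_n - z)^+$ lies in $(Ker\, T)^+$. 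Because the positive part is monotone on increasing sequences, $(x_n - z)^+ \uparrow (x - z)^+$ in $E$, and the $\sigma$-ideal hypothesis forces $(x - z)^+ \in Ker\, T$. This translates to $(Tx - y)^+ = 0$, i.e.\ $Tx \leq y$, so $Tx = \sup Tx_n$.

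The main obstacle is precisely this last step: one has to produce an explicit increasing sequence inside $Ker\, T$ whose supremum encodes the desired inequality $Tx \leq y$. The choice $(x_n - z)^+$, combined with the identity $T(a^+) = (Ta)^+$ from Theorem 3.16 of [9] and the surjectivity of $T$, is exactly what bridges the ``kernel'' hypothesis and the ``supremum'' conclusion; without surjectivity an arbitrary upper bound $y$ need not be expressible as $Tz$, and the whole construction collapses.
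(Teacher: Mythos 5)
Your proposal is correct and follows essentially the same route as the paper: the first two implications are handled identically, and for $(3)\Rightarrow(1)$ your sequence $(x_n-z)^+ = x_n\vee z - z$ is exactly the paper's $z_n = y_n\vee x - x$, with the same use of surjectivity to write the upper bound as $Tz$ and the same appeal to the $\sigma$-ideal property of $\mathrm{Ker}\,T$. The only cosmetic difference is that you conclude $(Tx-y)^+=0$ directly, whereas the paper first reduces to the case where the upper bound's preimage satisfies $x\leq y$ and then shows $Tx=Ty$.
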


\begin{proof}$(1)\Rightarrow(2)$Let $T$ be a fuzzy Riesz $\sigma-$homomorphism and $A$ a fuzzy $\sigma-$ideal in $F$. Take $y_{n}\uparrow y$ with $y_{n}\in (T^{-1}(A))^{+}$. Since $T$ is a fuzzy Riesz $\sigma-$homomorphism, it follows that $Ty_{n}\in A^{+}$ with $Ty_{n}\uparrow Ty$. Note that $A$ is a fuzzy $\sigma-$ideal, we have $Ty\in A$, hence $y\in T^{-1}(A)$. This shows that $T^{-1}(A)$ is a fuzzy $\sigma-$ideal.

$(2)\Rightarrow(3)$ Since $Ker T$ is the inverse image of the fuzzy $\sigma-$ideal $\{0\}$ in $F$, it follows that $Ker T$ is a fuzzy $\sigma-$ideal in $E$.

$(3)\Rightarrow(1)$Assume that $Ker T$ a fuzzy $\sigma-$ideal, and $y_{n}\uparrow y$ with $y_{n}\in E^{+}$. It suffices to show that $Ty_{n}\uparrow Ty$ with $Ty_{n}\in F^{+}$. Assume that $u(x,y)>{\frac{1}{2}}$ (otherwise replace $x$ by $x\wedge y$), take $v(Ty_{n},Tx)>{\frac{1}{2}}$, $v(Tx,Ty)>{\frac{1}{2}}$, and $z_{n}=y_{n}\vee x-x$, it follows that $u(0,z_{n})>{\frac{1}{2}}$, $z_{n}\uparrow x\vee y-x=y-x$. As $Tz_{n}=Ty_{n}\vee Tx-Tx=Tx-Tx$, we have $z_{n}\in Ker T$. On account of that $Ker T$ is a fuzzy $\sigma-$ideal, it follows that $y-x\in Ker T$, we have $Tx=Ty$. This shows that $Ty_{n}\uparrow Ty$.\ \ \ \ $\Box$
\end{proof}

\begin{definition}Let $(E,\mu)$ be a fuzzy Riesz space, for $w\in E^{+}$, the sequence $\{x_{n}\}$ in $E$ is said to converge w-uniformly in fuzzy order to $x$, if for any number $\varepsilon>0$ there exists an index $n(\varepsilon)$ such that $\mu(|x-x_{n}|,\varepsilon w)>{\frac{1}{2}}$ for all $n>n(\varepsilon)$.
\end{definition}

\begin{definition}Let $(E,\mu)$ be a fuzzy Riesz space, the sequence $\{x_{n}\}$ in $E$ is said to converge relatively uniformly in fuzzy order to $x\in E$ whenever $\{x_{n}\}$ converges w-uniformly in fuzzy order to $x$ for some $w\in E^{+}$.
\end{definition}

\begin{definition}The sequence $\{x_{n}\}$ is called a fuzzy w-uniform Cauchy sequence if for any $\varepsilon>0$ there exists an index $n(\varepsilon)$ such that $\mu(|x_{m}-x_{n}|,\varepsilon w)>{\frac{1}{2}}$ for all $m,n>n(\varepsilon)$.
\end{definition}

The fuzzy Riesz space $E$ is said to be uniformly fuzzy complete whenever for every $w\in E$, any fuzzy w-uniform Cauchy sequence has a fuzzy w-uniform limit.

\begin{theorem}Let $(E,\mu)$ and $(F,\nu)$ be two fuzzy Riesz spaces, $T:E\rightarrow F$ be a fuzzy Riesz homomorphism, then the following statements holds:

(1)The image of a relatively fuzzy uniform Cauchy sequence in $E$ is a relatively fuzzy uniform Cauchy sequence in $F$;

(2)If $\{Tx_{n}\}$ is a relatively fuzzy uniform Cauchy sequence in $F$, then there exists a subsequence $\{Tx_{n_{k}}\}$ and a corresponding sequence $\{y_{n}\}$ in $E$ such that $Ty_{k}=Tx_{n_{k}}$ for all $k$, and $\{y_{n}\}$ is a relatively fuzzy uniform Cauchy sequence in $F$.

(3)If $F$ is uniformly fuzzy complete, then so is $M$. In other words, any fuzzy Riesz homomorphic image of a uniformly fuzzy complete space is uniformly fuzzy complete.
\end{theorem}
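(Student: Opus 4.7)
I would treat the three parts sequentially: (1) is essentially a direct calculation, (2) is the structural heart, and (3) synthesizes (1), (2), and the completeness hypothesis.

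For (1), the plan is to use two standard consequences of $T$ being a fuzzy Riesz homomorphism: $T|x|=|Tx|$ (Theorem 3.16 of [9], already invoked in Theorem 2.1) and the order-preservation $\mu(a,b)>\tfrac12\Rightarrow\nu(Ta,Tb)>\tfrac12$, which follows from $a\vee b=b\Rightarrow Ta\vee Tb=Tb$. Applying $T$ and linearity to $\mu(|x_m-x_n|,\varepsilon w)>\tfrac12$ will immediately yield $\nu(|Tx_m-Tx_n|,\varepsilon Tw)>\tfrac12$ with $Tw\in F^+$, as required.

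For (2), my plan is to lift the $F$-side modulus back to $E$ modulo $\ker T$, which is precisely the content of Theorem 2.1(2). Assuming the modulus $v\in T(E)^+$ (the case relevant for (3)), write $v=Tu$ with $u\in E^+$. First I would select $n_k$ strictly increasing so that $\nu(|Tx_{n_{k+1}}-Tx_{n_k}|,2^{-k}v)>\tfrac12$, and then apply Theorem 2.1(2) with $a=x_{n_{k+1}}-x_{n_k}$, $b=2^{-k}u$ to produce $z_k\in\ker T$ satisfying $\mu(|(x_{n_{k+1}}-x_{n_k})-z_k|,2^{-k}u)>\tfrac12$. Next I would define $y_1=x_{n_1}$ and recursively $y_{k+1}=y_k+(x_{n_{k+1}}-x_{n_k})-z_k$: induction yields $Ty_k=Tx_{n_k}$, and the construction forces $\mu(|y_{k+1}-y_k|,2^{-k}u)>\tfrac12$. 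A telescoping argument using the fuzzy triangle inequality $|a+b|\le|a|+|b|$ combined with translation invariance (Definition 1.3(i)) will then give $\mu(|y_m-y_k|,2^{1-k}u)>\tfrac12$ for $m>k$, so $\{y_k\}$ is $u$-uniform Cauchy in $E$. The main obstacle lies here: naive preimage selection fails because the $x_n$ may drift through different cosets of $\ker T$, and Theorem 2.1(2) is exactly what lets us absorb each drift into a kernel correction $z_k$ while preserving $T$-values and forcing geometric decay of $|y_{k+1}-y_k|$ in $E$.

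For (3), I would assume $E$ is uniformly fuzzy complete. Any $v$-uniform Cauchy sequence in $T(E)$ has the form $\{Tx_n\}$ with $v=Tu\in T(E)^+$. Part (2) then produces a subsequence $\{Tx_{n_k}\}$ and a $u$-uniform Cauchy $\{y_k\}\subseteq E$ with $Ty_k=Tx_{n_k}$. Uniform completeness of $E$ furnishes $y\in E$ with $y_k\to y$ $u$-uniformly, and the order-preservation step from (1) promotes this to $Tx_{n_k}=Ty_k\to Ty$ $v$-uniformly in $T(E)$. Finally, since $\{Tx_n\}$ is $v$-Cauchy with a $v$-uniformly convergent subsequence, the whole sequence converges to $Ty\in T(E)$, showing $T(E)$ is uniformly fuzzy complete.
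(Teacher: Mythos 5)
Your proposal is correct and follows essentially the same route as the paper: part (1) by applying $T$ to the modulus bound, part (2) by choosing a subsequence with geometric bounds $2^{-k}Tw$ and using Theorem 2.1(2) to produce kernel-corrected increments summing to $y_k=x_{n_1}+\sum(\text{increments})$ with $Ty_k=Tx_{n_k}$, and part (3) by completing $\{y_k\}$ in $E$ and pushing the limit forward. Your explicit telescoping estimate and the closing remark that a Cauchy sequence with a convergent subsequence converges are details the paper leaves implicit, but the argument is the same.
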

\begin{proof}(1)Suppose that $\{x_{n}\}$ is a relatively fuzzy uniform Cauchy sequence, the definition implies that there exist some $w\in E^{+}$ such that $\mu(|x_{m}-x_{n}|,\varepsilon w)>{\frac{1}{2}}$ for all $m,n>n(\varepsilon)$. Thus we have $\nu(|Tx_{m}-Tx_{n}|,\varepsilon Tu)>{\frac{1}{2}}$ for all $m,n>n(\varepsilon)$, that is $\{Tx_{n}\}$ is a relatively fuzzy uniform Cauchy sequence.

(2)Suppose that $\{Tx_{n}\}$ is a relatively fuzzy uniform Cauchy sequence, it follows that there exist $w\in E^{+}$ such that $\{Tx_{n}\}$ is a fuzzy $Tw$-uniform Cauchy sequence. Let $\{Tx_{n_{k}}\}$ be a subsequence such that $n_{1}<n_{2}<\cdots$ and $\nu(|Tx_{n_{k}+1}-Tx_{n_{k}}|,2^{-k}Tw)>{\frac{1}{2}}$ for all $k$. Theorem 2.1 implies that there exist $\{z_{n}\}$ such that
$Tz_{k}=Tx_{n_{k}+1}-Tx_{n_{k}}$ and $\mu(|z_{k}|,2^{-k}w)>{\frac{1}{2}}$ for all $k$. Take $y_{k}=x_{n1}+z_{1}+\cdots+z_{k-1}$ satisfies $Ty_{k}=T{n_{k}}$ for all $k$, and the sequence $\{y_{n}\}$ is a fuzzy w-uniform Cauchy sequence.

(3)Suppose that $E$ is uniformly fuzzy complete and let $\{Tx_{n}\}$ is a fuzzy $Tw$-uniform Cauchy sequence. Part (2) implies that there is a subsequence $\{Tx_{n_{k}}\}$ and a corresponding sequence $\{y_{n}\}$ such that $Ty_{k}=Tx_{n_{k}}$ for all $k$ and $\{y_{n}\}$ is a fuzzy w-uniform Cauchy sequence. Since $E$ is uniformly fuzzy complete by hypothesis, $\{y_{n}\}$ converges w-uniformly in fuzzy order to $y\in E$, and so $\{Ty_{k}\}$ converges $Tw-$uniformly in fuzzy order to $Ty$. In other words, $\{Tx_{n_{k}}\}$ converges $Tw$-uniformly in fuzzy order to $Ty$ as $k\rightarrow\infty$.\ \ \ \ $\Box$
\end{proof}

\section{Fuzzy Riesz homomorphism and Fuzzy quotient spaces}

\begin{definition}Let $A$ be a linear subspace of the (real or complex) vector space $V$, $f_{1}$ and $f_{2}$ in $V$ are equivalent whenever $f_{1}-f_{2}\in A$.
\end{definition}

\begin{definition}The set of all elements in $V$ equivalent to a given $f\in V$ is called the equivalence class of $f$ and denoted by $[f]$.
\end{definition}

Remark 1:$[f_{1}]=[f_{2}]$ if and only if $f_{1}$ and $f_{2}$ are equivalent,i.e., if and only if $f_{1}-f_{2}\in A$.

Remark 1:The linear subspace $A$ itself is one of the equivalence classes; it is the equivalence class containing the null element of $V$, so $A=[0]$. That is, $f\in A$ if and only if $[f]=[0]$.

\begin{definition}The set of all equivalence classes is called the quotient space of $V$ modulo $A$, denoted by $V/A$.
\end{definition}

\begin{definition}Let $A$ be a fuzzy ideal in fuzzy Riesz space $(E,\mu)$. Let $[f]$ and $[g]$ be two elements in $E/A$, if there exist elements $f_{1}$, $g_{1}\in E$ satisfying $f_{1}-g_{1}\in A$, then $\nu([f],[g])=1$; if there exist elements $f_{1}\in[f]$ and $g_{1}\in[g]$ satisfying $\mu(f_{1},g_{1})>{\frac{1}{2}}$, then $\nu([f],[g])={\frac{2}{3}}$; otherwise, $\nu([f],[g])=0$.
\end{definition}

Remark 1:$\nu([f],[g])>{\frac{1}{2}}$ if and only if for every $f_{1}\in[f]$ there exists an element $g_{1}\in[g]$ satisfying $\mu(f_{1},g_{1})>{\frac{1}{2}}$.

Remark 2:$\nu([f],[g])>{\frac{1}{2}}$ if and only if for every $f_{1}\in[f]$ and every $g_{1}\in[g]$ there exists an element $q\in A$, such that $\mu(q,g_{1}-f_{1})>{\frac{1}{2}}$.

\begin{theorem}If $A$ is a fuzzy ideal in the fuzzy Riesz space $E$, the fuzzy quotient space $E/A$ is a fuzzy Riesz space with respect to the fuzzy order defined in Definition 3.4.
\end{theorem}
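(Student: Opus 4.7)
The plan is to verify, in order, that the relation $\nu$ of Definition 3.4 satisfies the three fuzzy-order axioms on $E/A$, that it is compatible with the canonical vector-space operations on the quotient in the sense of Definition 1.3, and finally that $E/A$ is a fuzzy lattice under the induced join and meet. The only structural property of $A$ I will use repeatedly is that it is a fuzzy solid vector subspace: if $\mu(|x|,|y|)>\frac{1}{2}$ and $y\in A$ then $x\in A$.

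For the order axioms, reflexivity follows at once from $f-f=0\in A$, which gives $\nu([f],[f])=1$. For antisymmetry, suppose $\nu([f],[g])+\nu([g],[f])>1$; then either one of the two values equals $1$, in which case $f-g\in A$ and $[f]=[g]$, or both equal $2/3$, in which case I would extract representatives $f_1\in[f]$, $g_1\in[g]$, $f_2\in[f]$ with $\mu(f_1,g_1)>\frac{1}{2}$ and $\mu(g_1,f_2)>\frac{1}{2}$, combine them via transitivity of $\mu$ in $E$, and use solidity of $A$ together with $f_1-f_2\in A$ to conclude $[f]=[g]$. Transitivity is a direct representative-chase through the transitivity of $\mu$. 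The compatibility axioms (i) and (ii) of Definition 1.3 lift verbatim from $E$ to $E/A$ by selecting representatives, since addition and positive scalar multiplication on equivalence classes are computed via representatives.

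The main obstacle is the lattice structure. I would define $[f]\vee[g]:=[f\vee g]$ and $[f]\wedge[g]:=[f\wedge g]$, and the nontrivial step is well-definedness: if $f_1-f_2\in A$ and $g_1-g_2\in A$, then $f_1\vee g_1-f_2\vee g_2\in A$. This rests on the Birkhoff-type inequality $|f_1\vee g_1-f_2\vee g_2|\le|f_1-f_2|+|g_1-g_2|$, which holds in fuzzy Riesz spaces by Proposition 4.7 of [4]; solidity of $A$ then puts the left-hand side in $A$. To show $[f\vee g]$ actually realizes the supremum in the sense of Definition 1.2, the upper-bound clause is routine, while the minimality clause requires showing that any $[y]\in U(\{[f],[g]\})$ satisfies $[y]\in U([f\vee g])$. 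For this I would lift $[y]$ to representatives $y_1,y_2\in[y]$ with $\mu(f,y_1)>\frac{1}{2}$ and $\mu(g,y_2)>\frac{1}{2}$, note $y_1-y_2\in A$, and then use the identity $f\vee g=f+(g-f)^+$ together with solidity of $A$ to exhibit a representative of $[y]$ dominating $f\vee g$ in $E$. The argument for the infimum follows by the dual identity $f\wedge g=-((-f)\vee(-g))$, completing the verification.
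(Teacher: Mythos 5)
Your proposal is correct and shares the paper's overall skeleton (order axioms, then compatibility with the linear structure, then the lattice operations realized by $[f]\vee[g]=[f\vee g]$), but the two pivotal steps are executed by different, essentially dual, maneuvers. For antisymmetry the paper takes $q_1,q_2\in A$ with $\mu(q_1,g-f)>\frac{1}{2}$ and $\mu(q_2,f-g)>\frac{1}{2}$ and dominates $|f-g|$ by $\sup(-q_1,-q_2)\in A$, whereas you chain representatives $f_1\le g_1\le f_2$ with $f_1,f_2\in[f]$ and squeeze $g_1-f_1$ between $0$ and $f_2-f_1\in A$; both reduce to solidity of $A$. For the least-upper-bound property of $[f\vee g]$ the paper perturbs the representatives of $[f]$ and $[g]$ downward by a common $q=q_1\wedge q_2\in A$ and uses the translation invariance $(f+q)\vee(g+q)=(f\vee g)+q$, while you instead push the upper bound up to $y_1\vee y_2$ and check that it stays in $[y]$ via $0\le y_1\vee y_2-y_1=(y_2-y_1)^{+}\le|y_1-y_2|\in A$. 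Your explicit well-definedness check via the Birkhoff-type inequality $|f_1\vee g_1-f_2\vee g_2|\le|f_1-f_2|+|g_1-g_2|$ is sound but strictly redundant: once $[f\vee g]$ is identified as the supremum of the two classes, independence of the representatives follows from uniqueness of suprema under antisymmetry, which is how the paper obtains it implicitly. Finally, you derive the infimum from $f\wedge g=-((-f)\vee(-g))$ where the paper uses $f+g=(f\vee g)+(f\wedge g)$; both are immediate. I see no genuine gap in your outline.
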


\begin{proof}First we have to show that the fuzzy order defined in Definition is a fuzzy order.

(1) The fuzzy order relation $\mu$ is obviously reflexive.

(2) For $[f],[g]\in E/A$, and $\nu([f],[g])+\mu([g],[f])>1$. Definition 3.4 implies that $\nu([f],[g])>{\frac{1}{2}}$ and $\nu([g],[f])>{\frac{1}{2}}$, Remark 2 implies that there exist $q_{1}$, $q_{2}\in A$, such that $\mu(q_{1},g-f)>{\frac{1}{2}}$, $\mu(q_{2},f-g)>{\frac{1}{2}}$, then $\mu(f-g,-q_{1})>{\frac{1}{2}}$, and $\mu(g-f,-q_{2})>{\frac{1}{2}}$. Take $q=\sup(-q_{1},-q_{2})$, we have $\mu(|f-g|,q)>{\frac{1}{2}}$ with $q\in A$. As $A$ is a fuzzy ideal, it follows that $f-g\in A$. Thus, $[f]=[g]$.

(3) For $[f],[g],[h]\in E/A$, let $[f]\neq[h]$, $[g]\neq[f]$, $[g]\neq[h]$, if $\nu([f],[g])>{\frac{1}{2}}$ and $\nu([g],[h])>{\frac{1}{2}}$, Remark 2 implies there exist $q_{1},q_{2}\in A$, satisfying $\mu(q_{1},g-f)>{\frac{1}{2}}$, $\mu(q_{2},h-g)>{\frac{1}{2}}$, so $\mu(q_{1}+q_{2},h-f)>{\frac{1}{2}}$ with $(q_{1}+q_{2})\in A$, we have $g_{1}-f_{1}\in A$. Hence, we have $\nu([f],[h]) \geq \bigvee_{[g] \in E/A}\nu(([f], [g]) \wedge \nu([g], [h]))$.

Hence, $E/A$ with fuzzy order $u$ becomes a foset.

The vector space structure and the order structure are compatible. Indeed:

(1)Let $\nu([f],[g])>{\frac{1}{2}}$, it is easy to see that $\nu(\alpha[f],\alpha[g])>{\frac{1}{2}}$ for all $0\leq\alpha\in R$.

(2)Let $\nu([f],[g])>{\frac{1}{2}}$, choose the elements $f\in[f]$, $g\in[g]$ with $\mu(f,g)>{\frac{1}{2}}$. Fix some $h\in[h]$, note that $f,g,h\in E$ and $E$ is fuzzy Riesz space imply $\mu(f+h,g+h)>{\frac{1}{2}}$, hence $\nu([f+h],[g+h])>{\frac{1}{2}}$ holds, and so $\nu(([f]+[h]),([g]+[h]))>{\frac{1}{2}}$.

Hence, $E/A$ is a fuzzy ordered linear space with respect to the fuzzy order $\nu$.

It remains to show that $E/A$ is a fuzzy Riesz space with respect to the fuzzy order $\nu$. It suffices to show that $[f]\vee[g]$ exists for all $[f]$, $[g]$ and is equal to $[f\vee g]$.
Since $\nu([f],[f\vee g])>{\frac{1}{2}}$, $\nu([g],[f\vee g])>{\frac{1}{2}}$, we have $\nu([f]\vee[g],[f\vee g])>{\frac{1}{2}}$. Next, we need to show that any bound $[h]$ of $[f]$ and $[g]$ satisfies $\nu([f\vee g],[h])>{\frac{1}{2}}$. Take $f\in [f]$, $g\in[g]$, $h\in[h]$, then there exist $q_{1}$, $q_{2}\in A$ such that $\mu(f+q_{1},h)>{\frac{1}{2}}$ and $\mu(g+q_{2},h)>{\frac{1}{2}}$. Take $q=q_{1}\wedge q_{2}\in A$, we have $\mu(f+q,h)>{\frac{1}{2}}$ and $u(g+q,h)>{\frac{1}{2}}$, it follows that $\mu((f+q)\vee(g+q),h)>{\frac{1}{2}}$.
The proposition 4.10 of [4] implies $u((f\vee g)+q,h)>{\frac{1}{2}}$. As $q\in A$, we have $\nu([f\vee g],[h])>{\frac{1}{2}}$. This proves that $[f]\vee [g]=[f\vee g]$. The Proposition 4.3 of [4] shows that $f+g=(f\vee g)+(f\wedge g)$, hence $[f]\wedge[g]$ exists and $[f]\wedge[g]=[f\wedge g]$. The fuzzy Riesz space $E/A$ is called the quotient fuzzy Riesz space of $E$ with respect to the fuzzy ideal $A$.\ \ \ \ $\Box$
\end{proof}

\begin{theorem}If $A$ is a fuzzy ideal of a fuzzy Riesz space $E$, $T$ is the canonical projection from $E$ onto $E/A$, then $T$ is a fuzzy Riesz homomorphism.
\end{theorem}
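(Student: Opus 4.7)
The plan is to show that the canonical projection $T\colon E\to E/A$ defined by $T(x)=[x]$ satisfies the defining identity of a fuzzy Riesz homomorphism, namely $T(x\vee y)=T(x)\vee T(y)$ for all $x,y\in E$, and that $T$ is linear. Linearity is a standard feature of quotient maps on vector spaces modulo a linear subspace (and a fuzzy ideal is, in particular, a linear subspace), so I would dispatch it in a sentence: $T(\alpha x+\beta y)=[\alpha x+\beta y]=\alpha[x]+\beta[y]=\alpha T(x)+\beta T(y)$, using the well-definedness of the vector operations on $E/A$.

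The lattice-homomorphism property is the real content, but it is already packaged in Theorem 3.1. There it was proved that the supremum in $E/A$ satisfies $[f]\vee[g]=[f\vee g]$. Invoking this, I would simply write
\[
T(x\vee y)=[x\vee y]=[x]\vee[y]=T(x)\vee T(y),
\]
which is exactly Definition 2.1. Hence $T$ is a fuzzy Riesz homomorphism.

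There is essentially no obstacle here; the whole statement is a direct corollary of Theorem 3.1 once one unpacks the definition of the canonical projection. The only small point worth mentioning explicitly in the write-up is surjectivity of $T$ (every class $[f]\in E/A$ is the image $T(f)$), so the phrase \emph{onto} in the statement is justified, and that $T$ is well-defined (independent of representatives), both of which follow immediately from the equivalence-class construction in Definition 3.2 and the fact that $A$ is a linear subspace.
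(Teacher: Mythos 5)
Your proposal is correct, but it takes a genuinely different route from the paper. You reduce the whole statement to the identity $[f]\vee[g]=[f\vee g]$, which was already established in the theorem showing that $E/A$ is a fuzzy Riesz space (Theorem 3.5 in the paper's numbering, not 3.1), and then the homomorphism property is the one-line computation $T(x\vee y)=[x\vee y]=[x]\vee[y]=T(x)\vee T(y)$; linearity and well-definedness are dispatched as you indicate. The paper instead works with the equivalent characterization of a fuzzy Riesz homomorphism via positive parts (Theorem 3.16 of [9]) and verifies $(Tx)^{+}=T(x^{+})$ from scratch: it first uses positivity of $T$ to get $\nu((Tx)^{+},T(x^{+}))>\frac{1}{2}$, and then shows by an explicit computation with representatives (using Proposition 4.10 of [4]) that $T(x^{+})$ lies below every common upper bound of $Tx$ and $0$ in $E/A$. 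Your argument is shorter and cleaner because it leans on work the paper has already done one theorem earlier; the paper's argument is essentially a re-derivation of the same supremum computation specialized to $g=0$, so it buys a degree of self-containedness (and an illustration of the positive-part criterion) at the cost of duplication. Both are valid; just correct the theorem reference in your write-up.
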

\begin{proof}Let $x\in E$, as $\mu(x,x^{+})>{\frac{1}{2}}$ and $\mu(0,x^{+})>{\frac{1}{2}}$, it follows that $v((Tx)^{+},T(x^{+}))>{\frac{1}{2}}$ and $v(0,T(x^{+})>{\frac{1}{2}}$. On the other hand, assume that $\mu(Tx,Ty)>{\frac{1}{2}}$ and $\mu(0,Ty)>{\frac{1}{2}}$ hold in $E/A$, choose $x_{1},y_{1},y_{2}\in E$ with $\mu(x_{1},y_{1})>{\frac{1}{2}}$ and $\mu(0,y_{2})>{\frac{1}{2}}$. Since $\mu((x_{1}+(x-x_{1})),y_{1}\vee y_{2}+(x-x_{1})^{+})>{\frac{1}{2}}$, proposition 4.10 of [4] implies $\mu(x,y_{1}+(y_{2}-y_{1})^{+}+(x-x_{1})^{+})>{\frac{1}{2}}$ with $y_{1}+((y_{2}-y_{1})^{+}+(x-x_{1})^{+}\in y_{1}+A$. Moreover, from $\mu(0,y_{1}\vee y_{2}+(x-x_{1})^{+})>{\frac{1}{2}}$, it follows that $\mu(x^{+},y_{1}+((y_{2}-y_{1})^{+}+(x-x_{1})^{+})>{\frac{1}{2}}$. Hence, $\nu((Tx^{+}),Ty_{1})>{\frac{1}{2}}$. Therefore, $(Tx)^{+}=T(x^{+})$.\ \ \ \ $\Box$
\end{proof}

\begin{theorem}If $(E,\mu)$ and $(F,\nu)$ are fuzzy Riesz spaces and $T:E\rightarrow F$ is a fuzzy Riesz homomorphism, then the range of $T$ is a Riesz subspace of $F$ and the kernel $Ker(T)$ of $T$ is a fuzzy ideal in $F$.
\end{theorem}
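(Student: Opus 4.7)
The plan is to handle the two assertions independently; the first is nearly a special case of Theorem 2.2(1), while the second requires a careful use of the absolute-value preservation property of fuzzy Riesz homomorphisms. Before starting, I note that the phrase ``$Ker(T)$ of $T$ is a fuzzy ideal in $F$'' should almost certainly read ``in $E$'', since the kernel is a subset of the domain; I proceed under that reading.

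For the range, since $E$ is trivially a fuzzy Riesz subspace of itself, Theorem 2.2(1) applied with $Z=E$ yields immediately that $T(E)$ is a fuzzy Riesz subspace of $F$. If a direct verification is preferred, linearity of $T$ makes $T(E)$ a linear subspace of $F$, and for $h=Tx$, $q=Ty$ in $T(E)$ the homomorphism identity gives $h\vee q=T(x\vee y)\in T(E)$, with $h\wedge q\in T(E)$ following from the Riesz identity $x\wedge y=x+y-x\vee y$ together with linearity (or from Proposition 4.3 of [4]).

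For the kernel, linearity of $T$ directly makes $Ker(T)$ a vector subspace, so only the fuzzy solid property remains. Assume $y\in Ker(T)$ and $\mu(|x|,|y|)>\tfrac{1}{2}$; the goal is $x\in Ker(T)$. The crucial input is Theorem 3.16 of [9], already invoked in the proof of Theorem 2.1, which guarantees $T|z|=|Tz|$ for every $z\in E$. Thus $|Ty|=T|y|=0$. To transfer the inequality $\mu(|x|,|y|)>\tfrac{1}{2}$ across $T$, I would use translation invariance (Definition 1.3(i)) to rewrite it as $\mu(0,|y|-|x|)>\tfrac{1}{2}$ and then apply the fuzzy positivity of $T$ (every fuzzy Riesz homomorphism is fuzzy positive, since $T(x^{+})=(Tx)^{+}$) to obtain $\nu(0,T(|y|-|x|))>\tfrac{1}{2}$, i.e.\ $\nu(T|x|,T|y|)>\tfrac{1}{2}$. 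Substituting $T|y|=0$ yields $\nu(|Tx|,0)>\tfrac{1}{2}$; combined with the reflexive-plus-positivity fact $\nu(0,|Tx|)>\tfrac{1}{2}$, the antisymmetry axiom (Definition 1.1(ii)) forces $|Tx|=0$, whence $Tx=0$ and $x\in Ker(T)$.

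The main obstacle is purely one of bookkeeping in the fuzzy framework: the paper phrases fuzzy positivity only on the positive cone, so to exploit it on a relation of the form $\mu(|x|,|y|)>\tfrac{1}{2}$ one must first convert this to a statement about $|y|-|x|$ lying in the positive cone, which is where the compatibility axiom of Definition 1.3 is needed. A secondary and minor concern is the evident typo in the statement noted above; under the intended reading the proof is straightforward.
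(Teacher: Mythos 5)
Your proposal is correct and follows essentially the same route as the paper: the range part is dispatched via the Riesz-subspace preservation result (Theorem 2.2), and the kernel's solidity is obtained from $T|z|=|Tz|$ (Theorem 3.16 of [9]) together with fuzzy positivity of $T$, exactly as in the paper's argument. You are in fact slightly more careful than the paper at the final step, where you invoke antisymmetry to pass from $\nu(|Tx|,0)>\tfrac{1}{2}$ and $\nu(0,|Tx|)>\tfrac{1}{2}$ to $|Tx|=0$, and your observation that the kernel is an ideal in $E$ rather than in $F$ correctly identifies a typo in the statement.
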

\begin{proof}It is easy to see that the range of $T$ is a Riesz subspace of $F$. Thus, it suffices to show that kernel $Ker(T)$ of $T$ is a fuzzy ideal in $H$. To this end, let $f\in Ker(T)$, we have $Tf=0$, so $|Tf|=0$, Theorem 3.16 of [9] implies $T(|f|)=0$, hence $|f|\in Ker(T)$.

Conversely, let $|f|\in Ker(T)$, then $T(|f|)=0$, we have $|T(f)|=0$, hence $Tf=0$, showing that $f\in Ker(T)$.

Finally, take $|f|\in Ker(T)$, $\mu(|g|,|f|)>{\frac{1}{2}}$, then $T$ is a positive operator since $T$ is a fuzzy Riesz homomorphism, this shows that $\nu(T(|g|),T(|f|))>{\frac{1}{2}}$, showing that $|g|\in Ker(T)$ and $g\in Ker(T)$. This proves that $Ker(T)$ is a fuzzy ideal in $F$.\ \ \ \ $\Box$
\end{proof}

The next example shows that $E/A$ need not be fuzzy Archimedean.

Example: Let $X=l_{\infty}$ be the space of all bounded sequences. Define $\mu:X\times X\rightarrow [0,1]$ by

\begin{equation}
u(x,y)=\begin{cases}
1,&\text{if} x\equiv y;\\
\frac23,&\text{if} x_{i} \leq y_{i} \text{for all} i=1,2,\cdots,n,\cdots \text{and} x\neq y;\\
0,&\text{otherwise}.
\end{cases}
\end{equation}

where $x=(x_{1},x_{2},\cdots,x_{n},\cdots)$, and $y=(y_{1},y_{2},\cdots,y_{n},\cdots)$.

Hence, $X$ is a fuzzy ordered linear space.

Consider $A$ the principal ideal generated by the element $x=(1,1/2^{2},\cdots,1/n^{2},\cdots)$. Let $e=(1,1,\cdots)$, and $y=(1,1/2,\cdots,1/n,\cdots)$, fix some $k$, and note that $\mu(1/n,1/ke(n))>{\frac{1}{2}}$ for all $n\geq k$. This implies that $\nu([y],1/k[e])>{\frac{1}{2}}$ with $[y]\neq[0]$ holds in $E/A$ for all $k=1,2,\cdots$. Corollary 4.3 of [7] implies that $E/A$ is not fuzzy Archimedean.

\begin{theorem}If $A$ is a fuzzy ideal in the fuzzy Riesz space $(E,\mu)$, $T$ is a fuzzy Riesz homomorphism from $E$ onto $E/A$, then the following statements are equivalent:

(1)$E/A$ is a fuzzy Archimedean space;

(2)$A$ is uniformly fuzzy closed;

(3)If $0\leq x_{n}\in A$ for $n=1,2,\cdots$ and the sequence $\{x_{n}\}$ is increasing and converges relatively uniformly in fuzzy order
to $x$, then $x\in A$;

(4)If $x,w\in E^{+}$ and $(nu-w)^{+}\in A$ for $n=1,2,\cdots$, then $x\in A$.
\end{theorem}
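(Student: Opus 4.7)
The plan is to prove the chain of implications $(1) \Rightarrow (2) \Rightarrow (3) \Rightarrow (4) \Rightarrow (1)$, translating back and forth between statements in $E$ and statements about the canonical projection $T$ into $E/A$ (which is a fuzzy Riesz homomorphism by Theorem 3.2). Throughout, I will use the two Remarks after Definition 3.4 to pass between $\nu([f],[g]) > \tfrac12$ and the existence of a representative difference lying inside $A$, and I will use the cited Corollary 4.3 of [7] to convert the nominal Archimedean definition into the usable sequential form ``$n[x] \le [w]$ for all $n \in \mathbb N$ forces $[x] = [0]$''.

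For $(1) \Rightarrow (2)$, I take $\{x_n\} \subseteq A$ converging relatively uniformly in fuzzy order to $x$, so there is $w \in E^{+}$ and $\varepsilon_n \downarrow 0$ with $\mu(|x - x_n|, \varepsilon_n w) > \tfrac12$. Projecting to $E/A$, the classes $[x - x_n]$ equal $[x]$, so $\nu(|[x]|, \varepsilon_n [w]) > \tfrac12$ for each $n$. Since $E/A$ is fuzzy Archimedean, this forces $|[x]| = [0]$, i.e.\ $|x| \in A$, hence $x \in A$. Step $(2) \Rightarrow (3)$ is immediate because the hypothesis in (3) is a restriction of that in (2) to increasing sequences.

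For $(3) \Rightarrow (4)$, given $x, w \in E^{+}$ with $(nx - w)^{+} \in A$ for every $n$, I set $y_n = (x - w/n)^{+} = \tfrac1n (nx - w)^{+}$. Because $A$ is a fuzzy ideal it is closed under positive scalar multiples, so $y_n \in A$. The elementary lattice identity $x = (x - w/n)^{+} + x \wedge (w/n)$ gives $y_n \uparrow x$ and $\mu(|x - y_n|, w/n) > \tfrac12$, showing $y_n$ converges $w$-uniformly in fuzzy order to $x$; then (3) yields $x \in A$.

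For $(4) \Rightarrow (1)$, suppose $[x] \in (E/A)^{+}$ with $n[x] \le [w]$ in $E/A$ for every $n$; I must show $[x] = [0]$. First I replace representatives: since $[x],[w] \ge [0]$, Remark 2 provides elements of $A$ that I may absorb into the representatives to assume $x, w \in E^{+}$. The relation $\nu(n[x],[w]) > \tfrac12$ then yields, via Remark 2, some $q_n \in A$ with $\mu(q_n, w - nx) > \tfrac12$, i.e.\ $nx - w \le -q_n$, so $(nx - w)^{+} \le q_n^{-} \le |q_n|$; solidity of the fuzzy ideal $A$ gives $(nx - w)^{+} \in A$. Applying (4) yields $x \in A$, i.e.\ $[x] = [0]$, and Corollary 4.3 of [7] concludes that $E/A$ is fuzzy Archimedean. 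I expect the main technical obstacle to be the bookkeeping in $(4) \Rightarrow (1)$, namely justifying the choice of nonnegative representatives for classes in $(E/A)^{+}$ and carefully translating the fuzzy inequality $\nu(n[x],[w]) > \tfrac12$ into a concrete order inequality in $E$ using the Remarks; the other three implications are essentially routine once the correct auxiliary sequence $y_n = (x - w/n)^{+}$ is identified.
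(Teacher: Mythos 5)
Your proposal is correct and follows essentially the same route as the paper: the same cycle $(1)\Rightarrow(2)\Rightarrow(3)\Rightarrow(4)\Rightarrow(1)$, the same auxiliary sequence $y_n=(x-n^{-1}w)^{+}$ in $(3)\Rightarrow(4)$, and the same translation between order relations in $E/A$ and membership in $A$ at both ends of the cycle. The only cosmetic differences are that in $(4)\Rightarrow(1)$ you extract $(nx-w)^{+}\in A$ via Remark 2 and solidity where the paper simply computes $T\bigl((nx-w)^{+}\bigr)=(nTx-Tw)^{+}=0$, and in $(1)\Rightarrow(2)$ you project $x-x_n$ directly instead of first invoking the triangle inequality $\mu(|x|,\varepsilon w+|x_n|)>\frac{1}{2}$.
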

\begin{proof}$(1)\Rightarrow(2)$Since $E/A$ is fuzzy Archimedean, let $w\in E^{+}$, $\{x_{n}\}\in A$ converge  w-uniformly in fuzzy order to $x$. Given $\varepsilon>0$, since $\mu(|x-x_{n}|+|x_{n}|,\varepsilon w+|x_{n}|)>{\frac{1}{2}}$, it follows that $\mu(|x|,\varepsilon w+|x_{n}|)>{\frac{1}{2}}$, showing that $\nu(T|x|,\varepsilon Tw)>{\frac{1}{2}}$.
Since $E/A$ is fuzzy Archimedean, it follows that $T|x|=0$. This shows that $x\in A$, and hence $A$ is uniformly fuzzy closed.

$(2)\Rightarrow(3)$Evident.

$(3)\Rightarrow(4)$Let $x,w\in E^{+}$ and $(nx-w)^{+}\in A$ for $n=1,2,\cdots$. Since $\mu(0,x-(x-n^{-1}w)^{+})>{\frac{1}{2}}$ and $\mu((x-(x-n^{-1}w)^{+}),|x-(x-n^{-1}w)|)>{\frac{1}{2}}$, it follows that $\mu((x-(x-n^{-1}w)^{+}),n^{-1}w)>{\frac{1}{2}}$. Take $w_{n}=(x-n^{-1}w)^{+}$, then the increasing sequence converges w-uniformly
in fuzzy order to $x$. In view of $w_{n}\in A$, we have $x\in A$.

$(4)\Rightarrow(1)$Let $x,y\in E^{+}$ and $\nu(nTx,Ty)>{\frac{1}{2}}$, then $0=(nTx-Ty)^{+}=T\{(nx-y)^{+}\}$, it follows that $(nx-y)^{+}\in A$. The hypothesis implies that $x\in A$, that is $Tx=0$. This shows that $E/A$ is fuzzy Archimedean.\ \ \ \ $\Box$
\end{proof}

\begin{theorem}Let $(E,u)$ be a fuzzy Riesz space, $T:E\rightarrow E/A$ be a fuzzy Riesz homomorphism, then the following statements holds:

(1)If $A$ is a fuzzy ideal, then $E/A$ is fuzzy Archimedean if and only if $A$ is uniformly fuzzy closed.

(2)If $x,y\in E^{+}$, there exists a natural number $m$(depending upon $x$ and $y$) such that $x\wedge ny=x\wedge my$ for $n=m+1,\cdots$
\end{theorem}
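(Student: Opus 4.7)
The plan is to treat the two parts in sequence, since (1) is essentially bookkeeping while (2) contains the real content.

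For part (1), I would simply invoke the previous theorem (Theorem 3.4), which establishes a four-way equivalence; the equivalence of its conditions (1) and (2) is exactly the biconditional asserted here. Since $T$ is just the canonical projection onto $E/A$ (well-defined as a fuzzy Riesz homomorphism by Theorem 3.2), no further argument is needed: the mere existence of $T$ plays no role, and the statement is a direct restatement of $(1)\Leftrightarrow(2)$ of Theorem 3.4.

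For part (2), I would begin with the observation that the sequence $z_n := x\wedge ny$ is increasing in $n$ (because $y\ge 0$ implies $ny\le (n+1)y$, hence $x\wedge ny\le x\wedge (n+1)y$) and bounded above by $x$. Setting $r_n := x - z_n = (x-ny)^+$ gives a decreasing sequence in $E^+$. The goal is then to show that the increasing sequence $\{z_n\}$ can only strictly increase at finitely many indices. My approach would be to combine Theorem 3.4(4) (which tells us that if $(nx-w)^+\in A$ for all $n$ then $x\in A$) with Theorem 3.3 applied to the canonical projection $T:E\to E/A$: in the quotient $E/A$, which is fuzzy Archimedean by part (1) together with the hypothesis, the images $T(r_n)$ must collapse for $n$ large, and pulling back through $T$ via Theorem 2.1(1) identifies a witness $z\in \mathrm{Ker}(T)$ that forces $z_n = z_m$ once $n\ge m+1$ for the first index $m$ at which this stabilization in the quotient begins.

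The main obstacle I expect is precisely the second step. In a classical Archimedean Riesz space the sequence $x\wedge ny$ need not stabilize at any finite $n$ (it stabilizes only in order), so the stabilization claim must rely on the coarseness built into the fuzzy order relation of Definition 3.4 (where $\nu$ takes only the values $0,\tfrac{2}{3},1$). The delicate step will be to convert order-convergence of $r_n$ into genuine equality $z_n = z_m$ using this coarseness, most likely by showing that only finitely many of the differences $z_{n+1}-z_n$ can be non-null before all further differences lie in $\mathrm{Ker}(T)$, and then invoking the fuzzy ideal property to conclude actual equality in $E$. Once that reduction is in place, choosing $m$ as the first index past the last strict jump completes the argument.
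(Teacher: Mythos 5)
Your reading of the theorem's logical structure does not match the paper's. Despite the wording ``the following statements holds,'' the paper's proof is organized as $(1)\Rightarrow(2)$ and $(2)\Rightarrow(1)$: statements (1) and (2) are being asserted as \emph{equivalent} properties of $E$ (exactly as in the theorem that follows, which pairs (2) with ``every fuzzy principal ideal is a fuzzy projection band''). You instead dispose of (1) by citing the preceding four-way equivalence (Theorem 3.8) --- which is fine as far as it goes --- and then try to prove (2) outright for an arbitrary fuzzy Riesz space. That cannot succeed, because (2) is false in general: in the paper's own example space $l_{\infty}$ with the induced fuzzy order ($\mu(x,y)=1$ if $x=y$, $\frac{2}{3}$ if $x_{i}\leq y_{i}$ for all $i$ and $x\neq y$, $0$ otherwise), take $x=e=(1,1,\dots)$ and $y=(1,\frac{1}{2},\frac{1}{3},\dots)$; the elements $x\wedge ny$, whose $k$-th coordinate is $\min(1,n/k)$, are pairwise distinct, so no $m$ exists. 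Your proposed escape --- exploiting the coarseness of the three-valued membership function of Definition 3.4 --- does not help, because that $\nu$ is the order on the quotient $E/A$, not on $E$, and because (2) asserts an honest equality $x\wedge ny=x\wedge my$ of elements of $E$, which is insensitive to how many values the membership function takes. Likewise, pushing $r_{n}=(x-ny)^{+}$ into $E/A$ and pulling back through $\mathrm{Ker}(T)$ can only show that the differences $z_{n+1}-z_{n}$ lie in $A$, not that they vanish.

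The step your proposal is missing is the one the paper uses for $(1)\Rightarrow(2)$: assuming $\{x\wedge ny\}$ has infinitely many distinct members, it \emph{constructs} a specific fuzzy ideal $A$, generated by the increasing sequence $w_{n}=nx-\inf(x,ny)$, uses the description of generated ideals (Theorem 4.2 of [8]) to show $y\notin A$, and then observes that $\mu(ny,\,x+(ny-x\wedge ny))>\frac{1}{2}$ forces $\nu(nTy,Tx)>\frac{1}{2}$ for all $n$ while $Ty\neq[0]$ in $E/A$ --- contradicting the fuzzy Archimedean property of that quotient supplied by hypothesis (1). The converse $(2)\Rightarrow(1)$ is then reduced, via criterion (4) of Theorem 3.8, to the telescoping identity $((n+1)x-y)^{+}-(nx-y)^{+}=x$ once $nx\wedge y$ has stabilized, which places $x$ in $A$. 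Without recognizing the equivalence structure and without the construction of this witnessing ideal, part (2) has no proof.
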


\begin{proof}$(1)\Rightarrow(2)$Let $E/A$ be fuzzy Archimedean, and $x,y\in E^{+}$ such that $\{\inf(x,ny):n=1,2,\cdots\}$ contains infinitely many distinct members.
Let $A$ be the fuzzy ideal generated by the elements $w_{n}=nx-\inf(x,ny);n=1,2,\cdots$ The sequence $\{w_{n}:n=1,2,\cdots\}$ is increasing. Theorem 4.2 of [8] implies that $f\in A$ if and only if there exist natural numbers $n$ and $k$ satisfying $\mu(|f|, kw_{n})>{\frac{1}{2}}$. We will show that $y$ is no member of $A$. Indeed, if there exist $n$ and $k$ satisfying $\mu(y,kw_{n})>{\frac{1}{2}}$, it follows that $\mu((kx\wedge kny),(kn-1)y)>{\frac{1}{2}}$, $\mu((x\wedge kny),(kx\wedge kny))>{\frac{1}{2}}$, implying $\mu((x\wedge (kn+1)y),((x+y)\wedge(kn+1)y))>{\frac{1}{2}}$, $\mu(((x+y)\wedge(kn+1)y),kny)>{\frac{1}{2}}$. For $j\geq kn$, we have $\mu(\{x\wedge(j+1)y\},jy)>{\frac{1}{2}}$. Thus $(x\wedge jy)=(x\wedge(j+1)y)$, contradicting the assumption that the set $\{(x\wedge ny):n=1,2,\cdots\}$ contains infinitely many distinct elements. Hence, $y$ is no member of the ideal $A$. Since $T:E\rightarrow E/A$, and $\mu(ny,x+\{ny-(x\wedge ny)\})>{\frac{1}{2}}$, it follows that $\nu(nTy,Tx)>{\frac{1}{2}}$ for $n=1,2,\cdots$. As $y$ is no member of $A$, we have $Ty\neq[0]$. This contradicts the hypothesis that $E/A$ is fuzzy Archimedean. Hence, the set $\{(x\wedge ny):n=1,2,\cdots\}$ contains only a finite number of elements.

$(2)\Rightarrow(1)$Let $A$ be a fuzzy ideal in $E$, and $x,y\in E^{+}$ satisfying $(nx-y)^{+}\in A$ for $n=1,2,\cdots$. Theorem 3.8 implies that it need to show that $x\in A$. Note that $(nx-y)^{+}=(nx-y)\wedge0$, Theorem 4.8 of [5] implies that $(nx-y)\wedge0=(nx\wedge y)-y$, Proposition 4.3 of [4] implies that $nx\wedge y-y=nx-nx\wedge y$. By hypothesis, there exists a natural number $m$ such that $nx\wedge y=mx\wedge y$ holds for all $n\geq m$. We have $\{(n+1)x-y\}^{+}-(nx-y)^{+}=(n+1)x-(n+1)x\wedge y-(nx-nx\wedge y)=x$, this implies that $x\in A$ since ${(n+1)x-y}^{+}\in A,$ and $(nx-y)^{+}\in A$.\ \ \ \ $\Box$
\end{proof}

\begin{theorem}Let $(E,\mu)$ be a fuzzy Riesz space, then the following statements holds:

(2)If $x,y\in E^{+}$, there exists a natural number $m$(depending upon $x$ and $y$) such that $x\wedge ny=x\wedge my$ for $n=m+1,\cdots$

(3)Every fuzzy principal ideal in $E$ is a fuzzy projection band.
\end{theorem}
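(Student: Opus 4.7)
The plan is to derive both statements from the machinery already developed in the paper, chiefly Theorem 3.7. Statement (2) will follow by taking the trivial fuzzy ideal in Theorem 3.7 (so that the quotient is essentially $E$ itself), while statement (3) will require an explicit construction of a fuzzy band projection onto the principal ideal $I_z$ that uses (2) to guarantee the requisite suprema stabilize in finitely many steps.

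For (2), I would apply Theorem 3.7 to $E$ with the fuzzy ideal $A = \{0\}$. The zero ideal is uniformly fuzzy closed in a trivial way, since any $w$-uniform Cauchy sequence in $\{0\}$ has limit $0$. Under the canonical identification $E/\{0\} \cong E$ (the canonical projection of Theorem 3.6 becomes a fuzzy Riesz isomorphism), the implication $(2)\Leftrightarrow(1)$ established inside Theorem 3.7 transfers to $E$ itself, giving, for each pair $x, y \in E^{+}$, a natural number $m$ beyond which $x \wedge n y = x \wedge m y$ for $n = m+1, m+2, \ldots$. This is exactly statement (2).

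For (3), let $I_{z}$ denote the fuzzy principal ideal generated by $z \in E^{+}$. My approach is to construct explicitly a fuzzy band projection $P: E \to I_{z}$. For each $y \in E^{+}$, use statement (2) to select the smallest natural number $m(y)$ beyond which $y \wedge n z = y \wedge m(y) z$, and define $P(y) := y \wedge m(y) z$; extend to all of $E$ by $P(y) := P(y^{+}) - P(y^{-})$. I would then verify in order: (i) $P(y) \in I_{z}$ since $\mu(P(y), m(y) z) > \tfrac12$, and conversely $P$ fixes $I_{z}$, so $\mathrm{range}(P) = I_{z}$; (ii) $P$ is fuzzy positive and additive, using the stabilization indices; (iii) $P \circ P = P$, which is immediate from $P(y) \in I_{z}$; (iv) for $y \in E^{+}$, the residual $r := y - P(y)$ is disjoint from $z$, so $r \in I_{z}^{d}$. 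Combining (i)--(iv) with Theorem 2.7 (applied via the decomposition $y = P(y) + (y - P(y))$) gives $E = I_{z} \oplus I_{z}^{d}$, establishing $I_{z}$ as a fuzzy projection band in the sense used throughout Section 2.

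The main obstacle will be step (iv), the disjointness claim $r \wedge z = 0$ in the fuzzy setting. The crisp argument hinges on the identity $y \wedge (m+1) z = y \wedge m z$ forcing $(y - y \wedge m z) \wedge z = 0$, which in turn uses lattice identities of the form $(a + b) \wedge c - a \wedge c = b \wedge (c - a \wedge c)$; the fuzzy analogues must be extracted from Proposition 4.3 and Proposition 4.10 of [4] and Theorem 4.8 of [5] cited earlier in the paper, with care that each rewriting preserves the threshold $\mu(\cdot, \cdot) > \tfrac12$. A secondary delicate point is the additivity of $P$: if $m(y_{1})$ and $m(y_{2})$ are the stabilization indices for $y_{1}, y_{2} \in E^{+}$, I must show $P(y_{1} + y_{2}) = P(y_{1}) + P(y_{2})$, which reduces to verifying that beyond $\max\{m(y_{1}), m(y_{2})\}$ the sequence $(y_{1} + y_{2}) \wedge n z$ also stabilizes and equals $P(y_{1}) + P(y_{2})$.
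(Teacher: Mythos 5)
Your proposal misreads the theorem. Despite the wording ``the following statements holds,'' this result --- like Theorem 3.9 immediately before it, whose proof is labelled $(1)\Rightarrow(2)$ and $(2)\Rightarrow(1)$ --- is an equivalence between conditions (2) and (3), and the paper proves exactly the two implications $(2)\Rightarrow(3)$ and $(3)\Rightarrow(2)$. Neither condition holds in an arbitrary fuzzy Riesz space: in $l_{\infty}$ with the fuzzy order constructed in the example following Theorem 3.6, take $x=e=(1,1,\dots)$ and $y=(1,1/2,\dots,1/n,\dots)$; then $x\wedge ny$ has $k$-th coordinate $\min(1,n/k)$, which never stabilizes, so (2) fails there. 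Consequently your first step cannot succeed: Theorem 3.7 applied with $A=\{0\}$ yields only Archimedean-type statements about $E$ (its condition (4) becomes ``$(nx-w)^{+}=0$ for all $n$ implies $x=0$''), and there is no implication in Theorem 3.7 that produces a stabilization index $m$ with $x\wedge ny=x\wedge my$; that property is genuinely stronger than fuzzy Archimedeanness and is precisely what conditions (2) and (3) jointly characterize.

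Your treatment of (3), read as the implication $(2)\Rightarrow(3)$, is essentially sound and close to the paper's argument: the paper observes that under (2) one has $\sup_{n}(x\wedge ny)=x\wedge my$ for every $x\in E^{+}$, deduces $\mu(x,my)>\frac{1}{2}$ so that the principal ideal $A_{y}$ coincides with the band generated by $y$, and invokes Corollary 5.1 of [8] to conclude that $A_{y}$ is a projection band; your explicit projection $P(y)=y\wedge m(y)z$ is the same mechanism made concrete, and the disjointness of the residual is exactly the point the paper delegates to that corollary. But the converse $(3)\Rightarrow(2)$ is entirely absent from your proposal, and it is half of the theorem. The paper's argument for it is short and you would need to supply something like it: decompose $x=x_{1}+x_{2}$ with $x_{1}\in A_{y}$ and $x_{2}\in(A_{y})^{d}$, note $x_{2}\wedge ny=0$ so that $x\wedge ny=x_{1}\wedge ny$, and use $\mu(x_{1},my)>\frac{1}{2}$ for some $m$ to conclude $x\wedge ny=x_{1}=x\wedge my$ for all $n\geq m$.
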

\begin{proof}$(2)\Rightarrow(3)$Let $y\in E^{+}$, take $A_{y}$ be the fuzzy principal ideal generated by $y$. Corollary 5.1 of [8] implies that every positive $x$ in the fuzzy principal band generated by $y$ satisfies $(x\wedge ny\uparrow x:n=1,2,\cdots)$. The hypothesis implies that there exist a natural number $m$ such that $x\wedge ny=x\wedge my$ for all $n\geq m$. It follows that $x=x\wedge my$, thus $u(x,my)>{\frac{1}{2}}$, showing that $x\in A_{y}$. Thus, the fuzzy ideal $A_{y}$ is the fuzzy band generated by $y$. By Corollary 5.1 of [8], it suffices to show $\sup((x\wedge ny):n=1,2,\cdots)$ exists for every $x\in E^{+}$. Take $x\in E^{+}$, the hypothesis implies that there exist a natural number $m$ such that $x\wedge ny=x\wedge my$ holds for all $n\geq m$, and hence $\sup(x\wedge ny)=x\wedge my$.

$(3)\Rightarrow(2)$Let $y\in E^{+}$, by hypothesis, the fuzzy ideal $A_{y}$ generated by $y$ is the fuzzy band generated by $y$, and the fuzzy band $A_{y}$ is a fuzzy projection band. Take $x\in E^{+}$, and $x_{1}\in A_{y}$, $x_{2}\in (A_{y})^{d}$, we have $x_{2}\wedge ny=0$, showing that $x\wedge ny=x_{1}\wedge ny$. As $x_{1}\in A_{y}$, it follows that $\mu(x_{1}, my)>{\frac{1}{2}}$ for some natural number $m$, so $x_{1}\wedge ny=x_{1}=x_{1}\wedge my$ holds for all $n\geq m$. Thus, $x\wedge ny=x_{1}\wedge ny=x_{1}\wedge my=x\wedge my$ holds for all $n\geq m$.\ \ \ \ $\Box$
\end{proof}

\section{The fuzzy order continuous properties of fuzzy Riesz homomorphism}

\begin{definition}In a fuzzy Riesz space $(E,\mu)$, a net $\{x_{\alpha}\}$ is fuzzy order convergent to $x$, whenever there exists a net $\{y_{\alpha}\}$ with the same indexed set satisfying $\mu(|x_{\alpha}-x|,y_{\alpha})>{\frac{1}{2}}$ with $y_{\alpha}\downarrow0$, denoted by $x_{\alpha}\overset{O_{F}}{\longrightarrow} x$.

\end{definition}

\begin{definition}An operator $T:(E,\mu)\rightarrow (F,\nu)$ between two fuzzy Riesz spaces is said to be fuzzy $\sigma-$order continuous, whenever $x_{\alpha}\overset{O_{F}}{\longrightarrow} 0$ in $E$ implies $Tx_{\alpha}\overset{O_{F}}{\longrightarrow} 0$ in $F$.
\end{definition}

\begin{definition}An operator $T:(E,\mu)\rightarrow (F,\nu)$ between two fuzzy Riesz spaces is said to be fuzzy order continuous, whenever $x_{n}\overset{O_{F}}{\longrightarrow} 0$ in $E$ implies $Tx_{n}\overset{O_{F}}{\longrightarrow} 0$ in $F$.
\end{definition}

\begin{definition}A fuzzy Riesz space is said to have the fuzzy $\sigma-$order continuity property whenever every fuzzy positive operator from $E$ into an arbitrary fuzzy Archimedean Riesz space is fuzzy $\sigma-$order continuous.

A fuzzy Riesz space is said to have fuzzy order continuity property whenever every fuzzy positive operator from $E$ into an arbitrary fuzzy Archimedean Riesz space is fuzzy order continuous.
\end{definition}

\begin{definition}Let $(E,\mu)$ be a fuzzy Riesz space. A fuzzy Riesz subspace $G$ of $E$ is said to be fuzzy order dense in $E$ if for every nonzero positive element $x\in E$, there exists a nonzero element $g\in G$ such that $\mu(g,x)>{\frac{1}{2}}$.
\end{definition}

\begin{definition}A fuzzy Dedekind complete Riesz space $F$ is said to be a fuzzy Dedekind completion of the Riesz space $E$ whenever $E$ is fuzzy Riesz isomorphic to a fuzzy order dense majoring Riesz subspace of $F$.
\end{definition}

\begin{theorem}Let $(E,\mu)$ and $(F,\nu)$ be two fuzzy Riesz spaces with $F$ fuzzy Dedekind complete, $T:E\rightarrow F$ be a fuzzy order bounded operator, then the following statements hold:

(1)$T$ is fuzzy order continuous if and only if the fuzzy null ideal $N_{S}$ is a fuzzy band for every fuzzy operator $S$ in the fuzzy ideal $A_{T}$ generated by $T$ in $FL_{b}(E,F)$.

(2)$T$ is fuzzy $\sigma-$order continuous if and only if the fuzzy null ideal $N_{S}$ is a fuzzy $\sigma-$ideal for $S\in A_{T}$.
\end{theorem}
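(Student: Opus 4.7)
The plan is to mirror the classical Riesz space proof, adapted to the fuzzy setting, and to treat (1) and (2) in parallel with nets replaced by sequences and fuzzy bands replaced by fuzzy $\sigma$-ideals. The null ideal should be $N_{S} = \{x \in E : S(|x|) = 0\}$ for a fuzzy positive $S$, which is a fuzzy ideal whenever $S$ is fuzzy positive.

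For the forward direction of (1), I would first establish that the class $FL_{n}(E,F)$ of fuzzy order continuous operators is a fuzzy band of $FL_{b}(E,F)$. Using fuzzy Dedekind completeness of $F$, one obtains a Riesz--Kantorovich-type formula for $|S|$ showing that if $\nu(|S_{1}|x, |S_{2}|x) > \tfrac{1}{2}$ for every $x \in E^{+}$ and $S_{2}$ is fuzzy order continuous then so is $S_{1}$; combined with closure under arbitrary suprema of directed sets of fuzzy order continuous operators, this gives that $FL_{n}(E,F)$ is a fuzzy band. Since $T \in FL_{n}(E,F)$ by hypothesis, the generated fuzzy ideal $A_{T}$ lies inside $FL_{n}(E,F)$, so every $S \in A_{T}$ is fuzzy order continuous. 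It then remains to check that for a fuzzy order continuous positive operator $S$ the fuzzy null ideal $N_{S}$ is a fuzzy band: $N_{S}$ is already a fuzzy ideal, and if $\{x_{\alpha}\} \subseteq N_{S}^{+}$ satisfies $x_{\alpha} \uparrow x$ in $E$, fuzzy order continuity forces $S(x_{\alpha}) \uparrow S(x)$, and $S(x_{\alpha}) = 0$ yields $S(x) = 0$, so $x \in N_{S}$.

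For the converse of (1), I would use that $|T|, T^{+}, T^{-} \in A_{T}$; so it suffices to prove $|T|$ is fuzzy order continuous. Take a net $x_{\alpha} \downarrow 0$ in $E^{+}$; by fuzzy Dedekind completeness of $F$ we get $|T|x_{\alpha} \downarrow y$ for some $y \in F^{+}$, and we need $y = 0$. Assuming $y \neq 0$ for contradiction, I would construct an auxiliary operator $S$ via a Riesz--Kantorovich type formula (essentially the infimum of the tails of $|T|$ along the net, extended by positive homogeneity and additivity using Theorem~4.12 of [4] to decompose elements of $E^{+}$) and verify that $0 \leq S \leq |T|$, hence $S \in A_{T}$. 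By construction $S$ vanishes on a tail of the net but not on their supremum, producing an increasing net in $N_{S}^{+}$ whose supremum is not in $N_{S}$; this contradicts the hypothesis that $N_{S}$ is a fuzzy band. Part (2) is then obtained by running the same argument with sequences $x_{n} \downarrow 0$, using that $FL_{c}(E,F)$ (fuzzy $\sigma$-order continuous operators) is a fuzzy $\sigma$-ideal in $FL_{b}(E,F)$ and replacing "fuzzy band" by "fuzzy $\sigma$-ideal" throughout.

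The main obstacle is the infrastructure step: verifying that $FL_{n}(E,F)$ is a fuzzy band (resp.\ $FL_{c}(E,F)$ a fuzzy $\sigma$-ideal) in $FL_{b}(E,F)$, since this requires a fuzzy analog of the Riesz--Kantorovich formula together with careful handling of the fuzzy inequalities $\nu(\cdot,\cdot) > \tfrac{1}{2}$ under suprema of directed families. The constructive step in the converse, where one must produce an operator $S \in A_{T}$ detecting the failure of order continuity, is the most delicate piece; once available, the remaining arguments are fairly routine translations of the crisp proof.
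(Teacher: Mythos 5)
Your forward direction is sound and matches what the paper does (the paper simply cites Theorem 3.9 of [11] for it, so your extra detail about $FL_{n}(E,F)$ being a fuzzy band is a reasonable expansion of that citation). The gap is in the converse, precisely at the step you yourself flag as ``the most delicate piece'': you never actually construct the operator $S\in A_{T}$. ``The infimum of the tails of $|T|$ along the net'' is not a well-defined positive operator --- a formula such as $u\mapsto\inf_{\alpha}|T|(u-u\wedge x_{\alpha})$ fails to be additive, and the Riesz--Kantorovich machinery produces infima of downward-directed \emph{families of operators}, not of tails of a net in $E$. The paper supplies exactly the missing ingredient: working with $0\le x_{\alpha}\uparrow x$ and a fixed $0<\varepsilon<1$, it takes $T_{\alpha}$ to be the component of $T$ on the fuzzy ideal $A_{\alpha}$ generated by $(\varepsilon x-x_{\alpha})^{+}$, via $T_{A}(u)=\sup\{Tz:0\le z\le u,\ z\in A\}$; then $0\le T_{\alpha}\downarrow\ \le T$, so $S=\inf T_{\alpha}$ exists, lies in $A_{T}$, and annihilates every $(\varepsilon x-x_{\alpha})^{-}$. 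Turning the tail information into a downward-directed family of operators is the whole point, and your sketch skips it.

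A second structural problem is your contradiction framing. To derive a contradiction from $y\ne 0$ you must show that $S$ is \emph{nonzero} at the supremum of your increasing net in $N_{S}^{+}$, and nothing in your sketch produces such a lower bound; obtaining it would essentially force you to redo the quantitative estimate anyway. The paper avoids this entirely by arguing directly: since $(\varepsilon x-x_{\alpha})^{-}\uparrow(1-\varepsilon)x$ lies in $N_{S}$ and $N_{S}$ is a fuzzy band, one gets $Sx=0$; then the inequality $x-x_{\alpha}\le(1-\varepsilon)x+(\varepsilon x-x_{\alpha})^{+}$ together with $T(\varepsilon x-x_{\alpha})^{+}=T_{\alpha}(\varepsilon x-x_{\alpha})^{+}\le T_{\alpha}(x)\downarrow Sx=0$ yields $\nu(Tx-y,(1-\varepsilon)Tx)>\frac{1}{2}$ for every $\varepsilon$, hence $y=Tx$. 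You should replace the contradiction argument by this direct estimate (or else prove the missing lower bound on $S$). One genuine plus on your side: the paper only treats $T\ge 0$ in this direction, so your reduction of the general order bounded case to $|T|$ via $|T|\in A_{T}$ is a useful supplement rather than a redundancy.
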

\begin{proof}(1)Assume $T$ is fuzzy order continuous, Theorem 3.9 of [11] implies that the fuzzy null ideal $N_{S}$ is a fuzzy band for $S\in A_{T}$. Next, assume that $T\geq0$, take $0\leq x_{\alpha}\uparrow x$ in $E$, $0\leq Tx_{\alpha}\uparrow y$ in $F$ with $\nu(y,Tx)>{\frac{1}{2}}$. To show that $T$ is fuzzy order continuous, it suffices to show that $y=Tx$. Let $0<\varepsilon<1$, for each $\alpha$, and $x\in E^{+}$, $T_{\alpha}=T_{A}$ on $A$, and $T_{\alpha}=0$  on $A^{d}$, where $T_{A}(x)=\sup\{T(z):0\leq z\leq x, z\in A\}$, and $A$ is the fuzzy ideal generated by $(\varepsilon x-x_{\alpha})^{+}$. Thus, $0\leq T_{\alpha}\downarrow\leq T$, and $T_{\alpha}(\varepsilon x-x_{\alpha})^{-}=0$ for each $\alpha$. Let $T_{\alpha}\downarrow S\geq0$ in $L_{b}(E,F)$, as $S\in A_{T}$, it follows that $S(\varepsilon x-x_{\alpha})^{-}=0$, this implies that $\{(\varepsilon x-x_{\alpha})^{-}\}\subseteq N_{S}$. On the other hand, in view of $(\varepsilon x-x_{\alpha})^{-}\uparrow (1-\varepsilon)x$, together with the fact that $N_{S}$ is a fuzzy band, we have $x\in N_{S}$. Therefore, $Sx=0$. Since $T(\varepsilon x-x_{\alpha})^{+}=T_{\alpha}(\varepsilon x-x_{\alpha})^{+}$ㄛand $\nu(T_{\alpha}(\varepsilon x-x_{\alpha})^{+},T_{\alpha}(x))>{\frac{1}{2}}$, and consider that $\mu(0,x-x_{\alpha})>{\frac{1}{2}}$, $\mu(x-x_{\alpha},(1-\varepsilon)x+(\varepsilon x-x_{\alpha})^{+})>{\frac{1}{2}}$, we have $\nu(Tx-y,T(x-x_{\alpha}))>{\frac{1}{2}}$, $\nu(T(x-x_{\alpha}),(1-\varepsilon)Tx+T(\varepsilon x-x_{\alpha})^{+})>{\frac{1}{2}}$. This implies that $\nu(Tx-y,(1-\varepsilon)Tx+T_{\alpha}(x))>{\frac{1}{2}}$. As $T_{\alpha}(x)\downarrow S(x)=0$, we have $\nu(Tx-y,(1-\varepsilon)Tx)>{\frac{1}{2}}$ for all $0<\varepsilon<1$. Therefore, $y=Tx$.

(2)It can be proven similarly.\ \ \ \ $\Box$\end{proof}

\begin{theorem}For a fuzzy Riesz space $(E,\mu)$, the following statements are equivalent:

(1)$E$ has the fuzzy $\sigma-$order continuity property;

(2)Every fuzzy lattice homomorphism $T$ from $E$ into a fuzzy Archimedean Riesz space is fuzzy $\sigma-$order continuous;

(3)Every uniformly fuzzy closed ideal of $E$ is a fuzzy $\sigma-$ideal.
\end{theorem}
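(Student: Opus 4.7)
The plan is to establish the cycle $(1) \Rightarrow (2) \Rightarrow (3) \Rightarrow (1)$. The implication $(1) \Rightarrow (2)$ is immediate: every fuzzy Riesz homomorphism $T$ is fuzzy positive, since $\mu(0,x) > \frac{1}{2}$ forces $x = x \vee 0$, whence $T(x) = T(x) \vee T(0) = T(x) \vee 0$ gives $\nu(0, T(x)) > \frac{1}{2}$; condition $(1)$, applied to this subclass, becomes exactly $(2)$.

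For $(2) \Rightarrow (3)$, let $A$ be a uniformly fuzzy closed ideal of $E$. By Theorem 3.8, $E/A$ is fuzzy Archimedean, and by Theorem 3.6 the canonical projection $\pi : E \to E/A$ is a fuzzy Riesz homomorphism. Applying $(2)$, $\pi$ is fuzzy $\sigma$-order continuous. Now given $\{x_n\} \subseteq A^+$ with $x_n \uparrow x$ in $E$, we have $x - x_n \downarrow 0$ in $E^+$, so $\pi(x - x_n) \overset{O_F}{\longrightarrow} [0]$ in $E/A$. Since $\pi(x_n) = [0]$, this simplifies to the constant sequence $\pi(x) \overset{O_F}{\longrightarrow} [0]$, forcing $\pi(x) = [0]$, i.e., $x \in A$. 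Therefore $A$ is a fuzzy $\sigma$-ideal.

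For $(3) \Rightarrow (1)$, given a fuzzy positive $T : E \to F$ with $F$ fuzzy Archimedean, I introduce the null ideal $N_T = \{x \in E : T|x| = 0\}$. The triangle inequality $T|x_1 + x_2| \leq T|x_1| + T|x_2|$ and the solidity relation make $N_T$ a fuzzy ideal. I check that $N_T$ is uniformly fuzzy closed: if $\{x_n\} \subseteq N_T^+$ converges relatively uniformly in fuzzy order to $x$ via $\mu(|x - x_n|, \varepsilon_n w) > \frac{1}{2}$ with $\varepsilon_n \downarrow 0$, then $\nu(T|x|, \varepsilon_n Tw) > \frac{1}{2}$ for every $n$, and the fuzzy Archimedean property of $F$ collapses this to $T|x| = 0$, so $x \in N_T$. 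By $(3)$, $N_T$ is a fuzzy $\sigma$-ideal; by Theorem 2.10, the canonical map $\pi : E \to E/N_T$ is then a fuzzy Riesz $\sigma$-homomorphism, and $T$ factors as $T = S \circ \pi$ where $S : E/N_T \to F$ is fuzzy positive with trivial null ideal.

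The main obstacle is the final step: promoting the $\sigma$-ideal status of $N_T$ to the full fuzzy $\sigma$-order continuity of $T$. The plan is to show that the induced factor $S$ is itself fuzzy $\sigma$-order continuous, exploiting the combination of its triviality of null ideal with the fuzzy Archimedean structure of $F$. Concretely, for $y_k \downarrow [0]$ in $E/N_T$, any candidate positive lower bound $v \in F^+$ with $\nu(v, S y_k) > \frac{1}{2}$ for all $k$, once the $y_k$ are lifted to representatives in $E^+$ and the resulting uniform error terms are absorbed by the fuzzy Archimedean hypothesis on $F$, would produce a nonzero element of $E/N_T$ annihilated by $S$, contradicting triviality of the null ideal; hence $v = 0$ and $S y_k \downarrow 0$. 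Composing with $\pi$ then yields $\sigma$-order continuity of $T$. Making the lifting-and-absorption argument rigorous is where the bulk of the technical work resides; an alternative contrapositive route, constructing from a hypothetical witness $x_n \downarrow 0$ with $\nu(z, T x_n) > \frac{1}{2}$ and $z > 0$ an auxiliary uniformly fuzzy closed ideal that fails to be a fuzzy $\sigma$-ideal, is equally viable and would directly contradict $(3)$.
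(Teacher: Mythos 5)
Your implications $(1)\Rightarrow(2)$ and $(2)\Rightarrow(3)$ are correct and essentially the paper's own: positivity of lattice homomorphisms for the first, and the canonical projection onto $E/A$ (via Theorem 3.8 and Theorem 3.6) for the second; where the paper cites Theorem 2.10 to pass from $\sigma$-order continuity of the projection to $A$ being a fuzzy $\sigma$-ideal, you argue it directly, which is fine. The genuine gap is in $(3)\Rightarrow(1)$, and it is exactly the step you flag as "where the bulk of the technical work resides." Knowing that the null ideal $N_T$ of $T$ alone is a fuzzy $\sigma$-ideal does not yield fuzzy $\sigma$-order continuity of $T$; the correct criterion (the paper's Theorem 4.7(2)) demands that $N_S$ be a fuzzy $\sigma$-ideal for \emph{every} operator $S$ in the fuzzy ideal $A_T$ generated by $T$ in $FL_b(E,F)$, with $F$ fuzzy Dedekind complete. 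Your proposed repair --- factoring $T=S_0\circ\pi$ through $E/N_T$ and arguing that a positive operator with trivial null ideal must be $\sigma$-order continuous --- is not a theorem even in the crisp setting (trivial null ideal does not imply order continuity), and the "lifting-and-absorption" paragraph is a restatement of the goal rather than a proof. The alternative contrapositive route you sketch is likewise only a plan.

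The paper closes this gap in two moves you are missing. First, it reduces to a fuzzy Dedekind complete codomain by replacing $F$ with its fuzzy Dedekind completion $F^\delta$ (legitimate because $F$ is fuzzy order dense in $F^\delta$, so $\sigma$-order continuity into $F$ and into $F^\delta$ coincide); this is needed because Theorem 4.7 is stated for Dedekind complete range spaces. Second, it runs your uniform-closedness computation not just for $N_T$ but for $N_S$ for every fuzzy order bounded $S$ with $|S|\leq T$: from $\{x_n\}\subseteq N_S$ with $\mu(|x-x_n|,\varepsilon_n u)>\frac{1}{2}$ one gets $|S|(|x|)=|S|(|x|-|x_n|)$ dominated by $\varepsilon_n|S|u$, hence $|S|(|x|)=0$, so each $N_S$ is uniformly fuzzy closed and, by $(3)$, a fuzzy $\sigma$-ideal. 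Only then does Theorem 4.7(2) deliver the fuzzy $\sigma$-order continuity of $T$. Extending your argument from $N_T$ to all $S\in A_T$ and inserting the reduction to $F^\delta$ would make your proof match the paper's and be complete.
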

\begin{proof}$(1)\Rightarrow(2)$Since $T(x^{+})=T(x\vee0)=T(x)\vee T(0)=[T(x)]^{+}$, it follows that $v(0,Tx)>{\frac{1}{2}}$, we have every fuzzy lattice homomorphism is a fuzzy positive operator.

(2)$\Rightarrow(3)$Let $A$ be a uniformly fuzzy closed ideal of $E$, Theorem 3.8 implies that $E/A$ is a fuzzy Archimedean Riesz space. Since the canonical projection of $E$ onto $E/A$ is a fuzzy lattice homomorphism, it follows that it is fuzzy $\sigma-$order continuous. Theorem 2.10 implies that the kernel $A$ is a fuzzy $\sigma-$ideal.

(3)$\Rightarrow(1)$Let $F$ be a fuzzy Archimedean Riesz space, $T:E\rightarrow F$ be a fuzzy positive operator. Since $F$ is a fuzzy order dense Riesz subspace of its fuzzy Dedekind completion $F^{\delta}$, it follows that $T:E\rightarrow F$ is fuzzy $\sigma-$order continuous if and only if $T:E\rightarrow F^{\delta}$ is fuzzy $\sigma-$order continuous. Hence we assume that $F$ is fuzzy Dedekind complete.

Take $S:E\rightarrow F$ be a fuzzy order bounded operator satisfying $|S|\leq T$. Let $\{x_{n}\}\subseteq N_{T}$ satisfies $\mu(|x-x_{n}|,\varepsilon_{n}u)>{\frac{1}{2}}$ with $\varepsilon_{n}\downarrow0$ and $u\in E^{+}$, since $|T|(|x|)=|T|(|x|-|x_{n}|)$ and $\nu(|T|(|x|-|x_{n}|),|T|(|x-x_{n}|))>{\frac{1}{2}}$, $\nu(|T|(|x-x_{n}|),\varepsilon_{n}|T|u)>{\frac{1}{2}}$, it follows that $|T|(|x|)=0$. This shows that $N_{S}$ is uniformly fuzzy closed. The hypothesis implies that $N_{S}$ is a fuzzy $\sigma-$ideal. Thus, the null ideal of every operator in $A_{T}$ is a fuzzy $\sigma-$ideal. Thus, Theorem 4.7 implies the operator $T$ is fuzzy $\sigma-$order continuous.\ \ \ \ $\Box$\end{proof}

\begin{definition}A fuzzy Riesz space $E$ is said to have the fuzzy countable sup property whenever $\sup A$ exists in $E$, then there exists at most countable subset $B$ of $A$ with $\sup B=\sup A$. A fuzzy Riesz space has the fuzzy countable sup property if and only if $0\leq H\uparrow h$ implies the existence of at most countable subset $G$ of $H$ with $\sup G=h$.
\end{definition}

\begin{theorem}A fuzzy Archimedean Riesz space has the fuzzy countable sup property if and only if every fuzzy $\sigma-$ideal is a fuzzy band.
\end{theorem}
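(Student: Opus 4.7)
The plan is to prove the two directions by exploiting the standard bridge between extracting countable cofinal subsets of increasing nets and passing suprema through $\sigma$-ideals.

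For the forward direction, I would assume $E$ has the fuzzy countable sup property and let $A$ be a fuzzy $\sigma$-ideal. To show $A$ is a fuzzy band, take any net $0 \le x_\alpha \uparrow x$ with $\{x_\alpha\} \subseteq A^+$. Since $\sup_\alpha x_\alpha$ exists in $E$, the fuzzy countable sup property supplies a countable subset $\{x_{\alpha_n}\}$ with $\sup_n x_{\alpha_n} = x$. Setting $y_n = x_{\alpha_1} \vee \cdots \vee x_{\alpha_n}$ gives an increasing sequence inside $A^+$ (since $A$ is an ideal hence closed under finite suprema) with $y_n \uparrow x$. The defining property of a fuzzy $\sigma$-ideal then forces $x \in A$, so $A$ is a fuzzy band.

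For the converse, I would assume every fuzzy $\sigma$-ideal is a fuzzy band and fix $0 \le H \uparrow h$ in $E^+$. My candidate $\sigma$-ideal is
\[
A = \bigl\{y \in E : \text{there exists a countable } H_0 \subseteq H \text{ with } \sup H_0 \text{ existing and } \mu(|y|, \sup H_0) > \tfrac{1}{2}\bigr\}.
\]
I would check in turn that $A$ is fuzzy solid (immediate from the definition), that $A$ is closed under addition (using directedness of $H$ to combine two countable witnesses $H_1, H_2$ into the countable set $H_1 \cup H_2$ and picking an $h_0 \in H$ dominating the needed upper bound of $|y_1|+|y_2|$), and finally that $A$ is a fuzzy $\sigma$-ideal: if $z_k \uparrow z$ with $z_k \in A^+$ and witnesses $H_k$, then $\bigcup_k H_k \subseteq H$ is countable and its supremum dominates $z$, giving $z \in A$. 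By hypothesis $A$ is then a fuzzy band, and since $H \subseteq A$ with $h = \sup H$, the closure of bands under existing suprema yields $h \in A$. Unpacking the definition, there is a countable $H_0 \subseteq H$ with $\mu(h,\sup H_0)>\tfrac{1}{2}$; combined with $\mu(\sup H_0, h) > \tfrac{1}{2}$ (because $H_0 \subseteq H \uparrow h$), antisymmetry of the fuzzy order gives $\sup H_0 = h$, which is the fuzzy countable sup property.

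The main obstacle is the construction of $A$ and the verification that it is genuinely a fuzzy $\sigma$-ideal in the fuzzy order sense, rather than just in an ambient crisp sense; the addition-closure step in particular relies on using $H \uparrow h$ to merge two countable witnesses and bound their combined supremum by a single element of $H$, and this is where the directedness and the Archimedean hypothesis interact. Once $A$ is shown to be a fuzzy $\sigma$-ideal, the remainder is a direct application of the standing assumption and the fact that fuzzy bands absorb all existing suprema of their own elements.
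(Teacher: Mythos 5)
Your forward direction is essentially the paper's (which states it in a single line): extract a countable subset with the same supremum, pass to the increasing sequence of its finite suprema inside the $\sigma$-ideal, and apply the $\sigma$-ideal property. The converse, however, has a genuine gap. The set $A=\{y\in E:\mu(|y|,\sup H_{0})>\tfrac{1}{2}$ for some countable $H_{0}\subseteq H\}$ is not a fuzzy ideal, because it is not a vector subspace: every $\sup H_{0}$ is dominated by $h$, so $A$ sits inside the solid hull of the order interval $[-h,h]$. Hence if $p\in H$ is not below $\tfrac{1}{2}h$ then $2p\notin A$, and closure under addition fails for the same reason, since $|y_{1}|+|y_{2}|$ can be as large as $2h$ while no element of $H$, and no supremum of a countable subset of $H$, exceeds $h$; your appeal to directedness to ``pick an $h_{0}\in H$ dominating $|y_{1}|+|y_{2}|$'' cannot succeed. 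Because $A$ is not an ideal, the hypothesis that every fuzzy $\sigma$-ideal is a band simply does not apply to it. Rescaling the definition to $|y|\leq\lambda\sup H_{0}$ repairs linearity but breaks the $\sigma$-ideal verification (the scalars attached to an increasing sequence need not be bounded) and also ruins your final antisymmetry step, which needs $\sup H_{0}\geq h$ without any scalar factor.

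The paper avoids exactly this trap by using a scale-invariant membership test: for fixed $0<\varepsilon<1$ it takes $A=\{y\in E:\exists$ a countable $G\subseteq H$ with $\inf\{|y|\wedge(\varepsilon h-g)^{+}:g\in G\}=0\}$. This condition is unchanged when $y$ is multiplied by a scalar, is closed under addition via directedness of $H$ together with the inequality $|y+z|\wedge(\varepsilon h-w)^{+}\leq|y|\wedge(\varepsilon h-p)^{+}+|z|\wedge(\varepsilon h-q)^{+}$ for $w\geq p\vee q$, and is a $\sigma$-ideal by taking countable unions of witnesses. The band hypothesis is then applied to $\{(p-\varepsilon h)^{+}:p\in H\}\uparrow(1-\varepsilon)h$ to put $h$ in $A$, yielding for each $\varepsilon_{n}=1-n^{-1}$ a countable $G_{n}\subseteq H$ with $\inf\{(\varepsilon_{n}h-g)^{+}:g\in G_{n}\}=0$, and the union $\bigcup_{n}G_{n}$ is the desired countable subset with supremum $h$. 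To salvage your argument you would need to replace your domination condition by a condition of this infimum type.
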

\begin{proof}If a fuzzy Riesz space has the fuzzy countable sup property, then every fuzzy $\sigma-$ideal is a fuzzy band. Conversely, let $0\leq H\uparrow h$ hold in $E$ with $H$ uncountable. We need to show that there exists a countable subset $G$ of $H$ with $\sup G=h$. Fix $0<\varepsilon<1$ㄛtake $A=\{y\in E:\exists$ a countable $G\subseteq H$ with $\inf\{|y|\wedge(\varepsilon h-g)^{+}:g\in G\}=0\}$. It suffices to show that $A$ is a fuzzy $\sigma-$ideal.

Clearly, $A$ is a fuzzy solid set. Since $y\in A$, $\alpha\in R$, it follows that $\alpha y\in A$. Besides, if $y,z\in A$, pick two countable subsets $P=\{p_{1},p_{2},\cdots\}$ and $Q=\{q_{1},q_{2},\cdots\}$ of $H$ with $\inf\{|y|\wedge(\varepsilon h-p)^{+}:p\in P\}=$$\inf\{|z|\wedge(\varepsilon h-q)^{+}:q\in Q\}=0$. Since $H$ is directed upward, for each $n$ and $m$, there exists some $w_{n,m}$ in $H$ with $\mu(p_{n}\vee q_{m}, w_{n,m})>{\frac{1}{2}}$. Since $\mu(|y+z|\wedge (\varepsilon h-w_{n,m})^{+},|y|\wedge(\varepsilon h-p_{n})^{+}+|z|\wedge(\varepsilon h-q_{m})^{+})>{\frac{1}{2}}$, it follows that the countable set $W=\{w_{n,m}:n,m=1,2\cdots\}$ of $H$ satisfies $\inf\{|y+z|\wedge (\varepsilon h-w)^{+}:w\in W\}=0$. Therefore, $y+z\in A$, so $A$ is a fuzzy ideal of $E$.

We claim that $A$ is a fuzzy $\sigma-$ideal. Let $\{y_{n}\}\subseteq A$ satisfying $0\leq\{y_{n}\}\uparrow y$ in $E$. For each $n$, pick a countable subset $G_{n}$ of $H$ with $\inf\{y_{n}\wedge(\varepsilon h-g)^{+}:g\in G_{n}\}=0$. Consider the countable subset $G=\cup_{n=1}^{\infty} G_{n}$ of $H$. Suppose $p\in H^{+}$, $\mu(p,y\wedge(\varepsilon h-g)^{+})>{\frac{1}{2}}$ for all $g\in G$, it follows that $\mu(y_{n}\wedge p,y_{n}\wedge(\varepsilon h-g)^{+})>{\frac{1}{2}}$ holds for all $g\in G_{n}$, together with the fact $\mu(0,y_{n}\wedge p)>{\frac{1}{2}}$, it follows that $y_{n}\wedge p=0$ for all $n$. As $y_{n}\wedge p\uparrow y\wedge p$, we have $p=0$. This shows that $\inf\{y\wedge(\varepsilon h-g)^{+}:g\in G\}=0$, and $y\in A$. Therefore, $A$ is a fuzzy $\sigma-$ideal, the hypothesis implies it is a fuzzy band.

As $(p-\varepsilon h)^{+}\in A$ for $p\in H$, and $\{(p-\varepsilon h)^{+}:p\in H\}\uparrow(1-\varepsilon)h$, it follows that $x\in A$. Therefore, there exists a countable subset $G$ of $H$ satisfying $\inf\{h\wedge(\varepsilon h-g)^{+}:g\in G\}=\inf\{(\varepsilon h-g)^{+}:g\in G\}=0$.

For $\varepsilon_{n}=1-n^{-1}(n=2,3,\cdots)$, choose a countable subset $G_{n}$ of $H$ with $\inf\{(\varepsilon_{n}h-g)^{+}:g\in G_{n}\}=0$, and consider the countable subset $G=\cup^{\infty}_{n=2}G_{n}$ of $H$. Take $\mu(y,h-g)>{\frac{1}{2}}$, $\mu(0,y)>{\frac{1}{2}}$ hold for all $g\in G$. Since $\mu(h-g,(1-\varepsilon_{n})h+(\varepsilon_{n}h-g)^{+})>{\frac{1}{2}}$ㄛit follows that $\mu(y,n^{-1}x+(\varepsilon_{n}h-g)^{+})>{\frac{1}{2}}$, that is $\mu(y,n^{-1}h+\inf\{(\varepsilon_{n}h-g)^{+}:g\in G_{n}\})>{\frac{1}{2}}$, thus $\mu(y,n^{-1}h)>{\frac{1}{2}}$. Therefore, $y=0$. This shows that $\inf\{h-g:g\in G\}=0$. We conclude that $\sup G=h$, proving that $E$ has the fuzzy countable sup property.
\ \ \ \ $\Box$\end{proof}

\begin{theorem}For a fuzzy Riesz space $E$, the following statements are equivalent:

(1)$E$ has the fuzzy order continuity property;

(2)Every fuzzy lattice homomorphism from $E$ into a fuzzy Archimedean Riesz space is fuzzy order continuous;

(3)Every uniformly fuzzy closed ideal of $E$ is a fuzzy band;

(4)$E$ has the fuzzy $\sigma-$order continuity property and the fuzzy countable sup property.
\end{theorem}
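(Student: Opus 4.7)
I plan to prove the four conditions equivalent by the cycle $(1) \Rightarrow (2) \Rightarrow (3) \Rightarrow (4) \Rightarrow (1)$, mirroring the structure of Theorem 4.2. The implication $(1) \Rightarrow (2)$ is immediate: any fuzzy lattice homomorphism $T$ satisfies $T(x^{+}) = T(x\vee 0) = (Tx)\vee 0 = (Tx)^{+} \geq 0$, so $T$ is a fuzzy positive operator, and (1) applied to such $T$ yields (2).

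For $(2) \Rightarrow (3)$, let $A$ be a uniformly fuzzy closed ideal of $E$. By Theorem 3.8, $E/A$ is a fuzzy Archimedean Riesz space, and by Theorem 3.6 the canonical projection $\pi \colon E \to E/A$ is a fuzzy Riesz (in particular, lattice) homomorphism into a fuzzy Archimedean space. Applying (2), $\pi$ is fuzzy order continuous, and the standard band-kernel characterization (the order-continuous analogue of Theorem 2.10, proved from $x_{\alpha}\uparrow x$ implying $\pi x_{\alpha}\uparrow \pi x$, which forces $\mathrm{Ker}(\pi)$ to be closed under suprema of bounded increasing nets) shows that $A=\mathrm{Ker}(\pi)$ is a fuzzy band.

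For $(3) \Rightarrow (4)$, note first that every fuzzy band is automatically a fuzzy $\sigma$-ideal, so (3) implies that every uniformly fuzzy closed ideal of $E$ is a fuzzy $\sigma$-ideal; by Theorem 4.2 (the implication $(3)\Rightarrow(1)$ there), $E$ has the fuzzy $\sigma$-order continuity property. For the fuzzy countable sup property, I first verify that $E$ is itself fuzzy Archimedean by applying (3) to the ideal $\{0\}$ (which is uniformly fuzzy closed, since a relatively uniformly null sequence with zero terms must have zero limit), and then I invoke Theorem 4.4 after showing that in the fuzzy Archimedean space $E$ every fuzzy $\sigma$-ideal is uniformly fuzzy closed — this follows because a relatively uniformly convergent sequence in a $\sigma$-ideal admits an increasing auxiliary sequence (by taking appropriate partial sums of absolute differences) to which the $\sigma$-ideal property applies, so (3) makes each fuzzy $\sigma$-ideal a fuzzy band and Theorem 4.4 delivers the countable sup property.

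Finally, for $(4) \Rightarrow (1)$, let $T\colon E \to F$ be a fuzzy positive operator into a fuzzy Archimedean Riesz space and let $x_{\alpha}\downarrow 0$ in $E$. The fuzzy countable sup property, applied to $\{-x_{\alpha}\}$ (or equivalently to the increasing net $x_{\alpha_{0}}-x_{\alpha}$), extracts a countable cofinal subfamily $\{x_{\alpha_{n}}\}$ with $x_{\alpha_{n}}\downarrow 0$; the fuzzy $\sigma$-order continuity of $T$ then yields $T x_{\alpha_{n}}\downarrow 0$ in $F$, and positivity of $T$ together with cofinality of $\{\alpha_{n}\}$ in the index set forces $T x_{\alpha}\downarrow 0$. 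The main obstacle I anticipate lies in step $(3) \Rightarrow (4)$: establishing that $E$ is fuzzy Archimedean and that every fuzzy $\sigma$-ideal is uniformly fuzzy closed, since these two bridging facts are not explicitly stated in earlier sections and require a careful interplay between uniform and $\sigma$-order convergence to fit the hypotheses of Theorem 4.4.
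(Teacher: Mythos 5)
Your cycle $(1)\Rightarrow(2)\Rightarrow(3)\Rightarrow(4)\Rightarrow(1)$ is exactly the paper's route, with the same supporting results: positivity of lattice homomorphisms for $(1)\Rightarrow(2)$; Theorem 3.8 plus the canonical projection and the kernel/band characterization for $(2)\Rightarrow(3)$; the $\sigma$-order-continuity equivalence (the paper's Theorem 4.8, your ``Theorem 4.2'') and the countable-sup criterion (the paper's Theorem 4.10, your ``Theorem 4.4'') for $(3)\Rightarrow(4)$; and the extraction of a countable cofinal decreasing subfamily for $(4)\Rightarrow(1)$, which the paper dismisses as ``Clearly.'' In the places where the paper is terse you supply the standard missing details, and your sketch that every fuzzy $\sigma$-ideal of a fuzzy Archimedean space is uniformly fuzzy closed (via an increasing sequence in the ideal converging up to $|x|$) is the right bridge to Theorem 4.10.

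There is, however, one genuinely circular step in your $(3)\Rightarrow(4)$. You propose to deduce that $E$ is fuzzy Archimedean by applying (3) to the ideal $\{0\}$, justifying that $\{0\}$ is uniformly fuzzy closed ``since a relatively uniformly null sequence with zero terms must have zero limit.'' But by Definition 2.11, the zero sequence converges $w$-uniformly to $x$ precisely when $\mu(|x|,\varepsilon w)>\frac{1}{2}$ for every $\varepsilon>0$, i.e.\ when $\{\lambda|x|:\lambda>0\}$ is bounded above by $w$; by Definition 1.9 this forces $x=0$ exactly when $E$ is fuzzy Archimedean. So ``$\{0\}$ is uniformly fuzzy closed'' is equivalent to the conclusion you are trying to reach, and cannot be assumed in order to invoke (3). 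The paper itself does not resolve this either --- its proof simply asserts ``Since $E$ is fuzzy Archimedean'' --- so either Archimedeanness must be taken as a standing hypothesis of the theorem (as it is in Theorem 4.10), or a non-circular derivation of it from (3) must be supplied; your argument as written does not provide one.
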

\begin{proof}$(1)\Rightarrow(2)$Clearly.

(2)$\Rightarrow(3)$Let $A$ be a uniformly fuzzy closed ideal of $E$. Theorem 3.8 implies $E/A$ is a fuzzy Archimedean Riesz space. Since the canonical projection of $E$ onto $E/A$ is a fuzzy lattice homomorphism, by the hypothesis, it is fuzzy order continuous, Theorem 4.7 implies that $A$ is a fuzzy band.

(3)$\Rightarrow(4)$Theorem 4.8 implies the fuzzy Riesz space has the fuzzy $\sigma-$order continuity property. Since $E$ is fuzzy Archimedean, by the hypothesis, we have every fuzzy $\sigma-$ideal of $E$ is a fuzzy band. The conclusion follows from Theorem 4.10.

(4)$\Rightarrow(1)$Clearly.\ \ \ \ $\Box$\end{proof}

\begin{definition}A fuzzy vector subspace $G$ of a fuzzy Riesz space $(E,\mu)$ is fuzzy majorizing $E$ whenever for each $x\in E$ there exists some $y\in G$ with $\mu(x,y)>{\frac{1}{2}}$.
\end{definition}

\begin{theorem}Let $(E,\mu)$ and $(F,\nu)$ be two fuzzy Riesz spaces with $F$ fuzzy Dedekind complete. If $M$ is a fuzzy majorizing Riesz subspace of $E$ and $T:M\rightarrow F$ is a fuzzy lattice homomorphism, then $T$ extends to all of $E$ as a fuzzy lattice homomorphism.
\end{theorem}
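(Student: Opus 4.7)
The plan is to build the extension via a Kantorovich-type infimum formula. For each $x\in E$, set
$$\hat{T}(x) \;=\; \inf\{\,T(y)\;:\; y\in M,\ \mu(x,y)>\tfrac12\,\}.$$
The fuzzy majorizing hypothesis guarantees that the defining set is nonempty for every $x\in E$. Because a fuzzy lattice homomorphism is automatically fuzzy positive (e.g.\ from $T(x^{+})=T(x)\vee T(0)=(Tx)^{+}$), the set $\{Ty:y\in M,\ \mu(x,y)>\tfrac12\}$ is bounded below in $F$ by $-T(z)$ for any $z\in M$ with $\mu(-x,z)>\tfrac12$; the fuzzy Dedekind completeness of $F$ then ensures the infimum exists. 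The first task is to verify that $\hat{T}|_M=T$: this follows because $T$ is fuzzy positive and monotone, so for $x\in M$ the infimum is attained at $y=x$.

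Next I would verify that $\hat{T}$ is linear. Additivity follows by the usual sandwich argument: for $x_1,x_2\in E$, the inequality $\hat{T}(x_1+x_2)\le \hat{T}(x_1)+\hat{T}(x_2)$ is immediate from the definition, while the reverse uses the Riesz decomposition in $M$ (given $y\in M$ with $\mu(x_1+x_2,y)>\tfrac12$ and $y_i\in M$ with $\mu(x_i,y_i)>\tfrac12$, decompose $y\wedge(y_1+y_2)$ into a sum in $M$ that dominates $x_1,x_2$ separately). Positive-scalar homogeneity is direct from Definition 1.3(ii). One extends $\hat{T}$ to negative scalars by the usual $\hat{T}(x)=\hat{T}(x^{+})-\hat{T}(x^{-})$ reduction, which is legitimate once additivity on the positive cone is established.

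The core step is to show that $\hat{T}$ is a fuzzy lattice homomorphism, i.e.\ $\hat{T}(x\vee z)=\hat{T}(x)\vee\hat{T}(z)$ for all $x,z\in E$. By additivity it is enough to prove $\hat{T}(x^{+})=\hat{T}(x)^{+}$, or equivalently that $\hat{T}(x)\wedge\hat{T}(z)=0$ whenever $x\wedge z=0$ in $E^{+}$. Let $u,v\in M$ satisfy $\mu(x,u)>\tfrac12$ and $\mu(z,v)>\tfrac12$ with $u,v\in M^{+}$. Then $\mu(x\wedge z, u\wedge v)>\tfrac12$ forces $u\wedge v$ to dominate $0$, and since $T$ is a fuzzy lattice homomorphism on $M$, $T(u)\wedge T(v)=T(u\wedge v)$. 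Taking the infimum over all admissible $u$ (resp.\ $v$) and invoking the infimum-distributivity in the fuzzy Dedekind complete space $F$ (together with Proposition 4.10 of [4] cited earlier) yields $\hat{T}(x)\wedge\hat{T}(z)=\inf T(u\wedge v)=0$, the last equality coming from the majorizing property applied to the disjoint pair $x,z$.

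I expect the main obstacle to be this last step: exchanging the two infima and justifying that $\inf_{u,v}T(u\wedge v)=0$. The subtlety is that a priori $u\wedge v$ need not be close to $x\wedge z=0$, only dominate it, so one must construct, for a prescribed tolerance $\varepsilon w$ in $F$, admissible pairs $(u,v)$ for which $T(u\wedge v)$ is small; this requires careful use of the Riesz decomposition inside the majorizing subspace $M$ together with the fuzzy order arithmetic $\mu(\cdot,\cdot)>\tfrac12$, as already exploited in Theorem~2.1(2). Once this disjointness-preservation is in hand, the lattice-homomorphism identity for arbitrary $x,z$ follows by writing $x\vee z = z + (x-z)^{+}$ and applying $\hat{T}((x-z)^{+})=\hat{T}(x-z)^{+}$.
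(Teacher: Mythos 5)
Your starting formula $\hat{T}(x)=\inf\{Ty: y\in M,\ \mu(x,y)>\frac{1}{2}\}$ is exactly the paper's auxiliary map $\theta$, and your verifications that the defining set is nonempty, bounded below, and that $\hat{T}$ restricts to $T$ on $M$ are fine. The fatal step is the claim that $\hat{T}$ is additive on all of $E$. It is not: $\hat{T}$ is only fuzzy sublinear, and no Riesz-decomposition sandwich can repair this, because the decomposition would have to split a majorant $y\in M$ of $x_1+x_2$ into majorants $y_1,y_2\in M$ of $x_1$ and $x_2$ with $y_1+y_2=y$, and such majorants need not exist inside $M$ below $y$. Concretely (in the crisp special case, made fuzzy by setting $\mu(x,y)=1$ if $x=y$, $\frac{2}{3}$ if $x\le y$ and $x\neq y$, and $0$ otherwise): take $E=\mathbb{R}^2$, $M=\{(t,t):t\in\mathbb{R}\}$ the diagonal (a majorizing Riesz subspace), $F=\mathbb{R}$, and $T(t,t)=t$. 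Then $\hat{T}(x_1,x_2)=\max(x_1,x_2)$, so $\hat{T}(1,0)+\hat{T}(0,1)=2\neq 1=\hat{T}(1,1)$; indeed the only lattice-homomorphic extensions here are the two coordinate projections, neither of which equals $\hat{T}$. Everything after this point in your proposal (the reduction to disjointness preservation, the exchange of infima) presupposes linearity of $\hat{T}$ and therefore collapses with it.

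The paper's proof uses the same map, but only as a fuzzy sublinear functional that preserves finite suprema and is additive in the restricted sense $\theta(x+z)=\theta(x)+\theta(z)$ for $x\in E$, $z\in M$. The linear extension is manufactured differently: for a fixed $x_0\in E$ one forms the cone $R=\{\sup(z_1+t_1x_0,\dots,z_n+t_nx_0): z_i\in M,\ t_i\ge 0\}$, proves that $\theta$ is genuinely additive on $R$ (this is where sup-preservation and the $M$-translation additivity enter), and defines $S(p_1-p_2)=\theta(p_1)-\theta(p_2)$ on the Riesz subspace $R-R$; sup-preservation of $\theta$ then makes $S$ a fuzzy lattice homomorphism there. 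If you want to salvage your approach, this is the missing idea: do not attempt to prove $\hat{T}$ linear on all of $E$, but isolate a sublattice on which it is additive and extend one generator at a time (a Zorn's lemma exhaustion is then needed to reach all of $E$ --- a step the paper itself leaves implicit).
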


\begin{proof}Define $\theta(x)=\inf\{Tz:z\in M,\mu(x,z)>{\frac{1}{2}}\}$ for $x\in E$. It is clearly that $\theta$ is fuzzy sublinear and preserves finite suprema. Moreover, $\theta(x+z)=\theta(x)+\theta(z)$ for all $x\in E$ and $z\in M$. For $x_{0}\in E$, let $R=\{\sup(z_{1}+t_{1}x_{0},\cdots,z_{n}+t_{n}x_{0}):z_{i}\in M,t_{i}\geq0,n\in N\}$ㄛthen $R$ is a cone in $E$, and $R$ contains finite suprema of its elements. Next, we will show that $\theta$ is additive on $R$. Pick $\alpha\in[0,\infty]$, $r_{1},r_{2}\in R$ defined by
$r_{1}=\sup\{z_{1}+t_{1}x_{0},\cdots,z_{n}+t_{n}x_{0}\}$, and $r_{2}=\sup\{y_{1}+s_{1}x_{0},\cdots,y_{m}+s_{m}x_{0}\}$, $n,m\in N$, $z_{i},y_{j}\in M$, $t_{i},s_{j}\geq0$. It follows that $r_{1}+r_{2}=\sup\{z_{i}y_{j}+(t_{i}+s_{j})x_{0}:i=1,\cdots,n;j=1,\cdots,m\}\in R$. Moreover, $\theta(r_{1}+r_{2})=\sup\{\theta((z_{i}+y_{j})+(t_{i}+s_{j})x_{0}):1\leq i\leq n;1\leq j\leq m\}$=$\sup\{\theta(z_{i}+t_{i}x_{0})+\theta(y_{j}+s_{j}x_{0}):1\leq i\leq n;1\leq j\leq m\}$=$\theta(r_{1})+\theta(r_{2})$. For $p_{1},p_{2},q_{1},q_{2}\in R$ satisfying $p_{1}-p_{2}=q_{1}-q_{2}$, the additivity of $\theta$ on $R$ implies that $\theta(p_{1})-\theta(p_{2})=\theta(q_{1})-\theta(q_{2})$. Define $Sx=\theta(p_{1})-\theta(p_{2})$ for $x=p_{1}-p_{2}\in R-R$. Hence, in view of $\theta(x)=Tx$ for all $x\in M$, we see that $S$ extends $T$ to all of $R-R$. Since $\theta$ preserves finite suprema, it follows that $S(v_{1}\vee v_{2})=S(v_{1})\vee S(v_{2})$. Hence, let $v_{1},v_{2},w_{1},w_{2}\in R$, since $(p_{1}-p_{2})\vee(q_{1}-q_{2})+p_{2}+q_{2}=(p_{1}+q_{2})\vee(p_{2}+q_{1})$,
we obtain $S((p_{1}-p_{2})\vee(q_{1}-q_{2}))=$$S((p_{1}+p_{2})\vee(p_{2}+q_{1})-(p_{2}+q_{2}))$=$S(p_{1}+q_{2})\vee S(p_{2}+q_{1})-Sp_{2}-Sq_{2}$=$S(p_{1}-p_{2})\vee S(q_{1}-q_{2})$. Thus, $S$ is a fuzzy lattice homomorphism on $R-R$. This completes the proof.\ \ \ \ $\Box$\end{proof}

\begin{definition}A fuzzy Riesz subspace $G$ of a fuzzy Riesz space $(E,\mu)$ is fuzzy order dense in $E$ whenever for each $x\in E^{+}$ there exists some $y\in G^{+}$ with $\mu(y,x)>{\frac{1}{2}}$.
\end{definition}

\begin{definition}A fuzzy Riesz space is called laterally complete whenever every set of pairwise disjoint positive elements has a supremum.
\end{definition}

\begin{theorem}Let $T:H\rightarrow F$ be a fuzzy order continuous lattice homomorphism from a fuzzy Dedekind complete Riesz space $H$ into a fuzzy Archimedean laterally complete Riesz space $(F,\nu)$. If $H$ is a fuzzy order dense Riesz subspace of a fuzzy Archimedean Riesz space $(E,\mu)$, then the formula $T(x)=\sup\{T(h):h\in H, 0\leq h\leq x\}$, $x\in E^{+}$, defines a fuzzy extension of $T$ from $E$ into $F$, which is a fuzzy order continuous lattice homomorphism.
\end{theorem}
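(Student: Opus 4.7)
The plan is to carry out the extension in four stages: first establish that the defining supremum exists in $F$ for each $x\in E^{+}$, then verify that the resulting map agrees with $T$ on $H^{+}$ and extends linearly to $E$, then check the fuzzy lattice homomorphism property, and finally deduce fuzzy order continuity. Write $\widehat{T}(x)=\sup\{T(h):h\in H,\ 0\leq h\leq x\}$ and $D_{x}=\{h\in H:0\leq h\leq x\}$. Note that $D_{x}$ is upward directed, since whenever $h_{1},h_{2}\in D_{x}$ we have $h_{1}\vee h_{2}\in H$ (as $H$ is a Riesz subspace) and $h_{1}\vee h_{2}\leq x$.

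The main obstacle is verifying that $\sup T(D_{x})$ exists in $F$, since $F$ is only assumed laterally complete (not Dedekind complete). For this I would invoke Zorn's lemma to pick a maximal pairwise disjoint family $\{h_{\alpha}\}_{\alpha\in I}\subseteq D_{x}$. Because $T$ is a fuzzy lattice homomorphism, $Th_{\alpha}\wedge Th_{\beta}=T(h_{\alpha}\wedge h_{\beta})=0$ for $\alpha\neq\beta$, so $\{Th_{\alpha}\}$ is a disjoint family in $F^{+}$, and lateral completeness of $F$ produces $y:=\sup_{\alpha}Th_{\alpha}\in F$. To see that $y$ dominates every $Th$ with $h\in D_{x}$, I would use maximality: for finite $J\subseteq I$ the element $h-\bigvee_{\alpha\in J}(h\wedge h_{\alpha})$ lies in $D_{x}$, and if its $H$-disjoint complement to the $h_{\alpha}$'s were nonzero it would contradict maximality. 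Combined with the fuzzy order continuity of $T$ on $H$ (which is available because $H$ is fuzzy Dedekind complete), this forces $Th\leq y$, so $y=\widehat{T}(x)$.

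Once existence is secured, the agreement $\widehat{T}(h)=T(h)$ for $h\in H^{+}$ is immediate from monotonicity of $T$. I then extend $\widehat{T}$ to all of $E$ by $\widehat{T}(x)=\widehat{T}(x^{+})-\widehat{T}(x^{-})$, with linearity following from the identity $D_{x+y}=D_{x}+D_{y}$ valid on positive cones (using that $H$ is fuzzy order dense in $E$ and thus the approximations from below are additive up to a cofinal subset). The fuzzy lattice homomorphism property $\widehat{T}(x\vee y)=\widehat{T}(x)\vee\widehat{T}(y)$ for $x,y\in E^{+}$ will be obtained by showing both sides equal $\sup\{T(h_{1}\vee h_{2}):h_{1}\in D_{x},\ h_{2}\in D_{y}\}$: the inequality $\geq$ is clear from monotonicity, while the reverse reduces by order denseness to approximating $x\vee y$ from below by elements of the form $h_{1}\vee h_{2}$ with $h_{i}\in D_{x},D_{y}$, and then applying the lattice homomorphism property of $T$ on $H$.

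For fuzzy order continuity, suppose $x_{\alpha}\downarrow 0$ in $E$. For any $h\in H$ with $0\leq h\leq x_{\alpha_{0}}$, the net $h\wedge x_{\alpha}$ decreases in $H$ and, by fuzzy order density of $H$ in $E$ together with the fuzzy Archimedean hypothesis on $E$, satisfies $h\wedge x_{\alpha}\downarrow 0$ in $H$. Since $T$ is fuzzy order continuous on $H$, $T(h\wedge x_{\alpha})\downarrow 0$ in $F$. Passing to the supremum over $h\in D_{x_{\alpha_{0}}}$ and applying the definition of $\widehat{T}$, together with Theorem 4.7 applied to the fuzzy null ideal machinery, yields $\widehat{T}(x_{\alpha})\downarrow 0$ in $F$, completing the proof. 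The delicate point to watch is that the decreasing net lives in $E$ while the order continuity hypothesis is given only on $H$, so order denseness must be used carefully at each transfer step. $\Box$
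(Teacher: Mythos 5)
Your outline founders at the one step that is actually hard: proving that $\sup T(D_{x})$ exists in $F$ when $F$ is only laterally complete. You propose to take a \emph{maximal pairwise disjoint family} $\{h_{\alpha}\}\subseteq D_{x}$ and claim that $y=\sup_{\alpha}Th_{\alpha}$ (which lateral completeness does provide) dominates $Th$ for every $h\in D_{x}$. That claim is false: take $E=H=F=\mathbb{R}^{2}$, $T$ the identity, $x=(1,1)$. The family $\{(\tfrac12,0),(0,1)\}$ is maximal pairwise disjoint in $D_{x}$ (any nonzero positive vector meets one of the two coordinate bands), yet its supremum $(\tfrac12,1)$ does not dominate $T(1,0)$ with $(1,0)\in D_{x}$. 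Maximality of a disjoint family does not force each member to be the largest element of $D_{x}$ in its band, and your proposed repair via the residual $h-\bigvee_{\alpha\in J}(h\wedge h_{\alpha})$ gives no contradiction, because that residual need not be disjoint from the $h_{\alpha}$'s --- in the example it equals $(\tfrac12,0)$, which lies in the band of $h_{1}$. A symptom of the gap is that your existence argument never invokes the fuzzy Dedekind completeness of $H$, which is precisely the hypothesis that makes the construction possible.

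The paper's route is genuinely different at this point. It first proves a uniform boundedness lemma: using the Archimedean property of $E$ and the order denseness of $H$, for each $r\in H^{+}$ there exist $k\leq r$ in $H^{+}$ and an integer $n$ with $\mu(P_{k}(h),nk)>\frac12$ for \emph{all} $h\in D$ simultaneously. Zorn's lemma then yields a maximal disjoint system $\{r_{i}\}$ of $H$ (not of $D_{x}$) with $P_{r_{i}}(h)\leq r_{i}$ for all $h\in D$, and Dedekind completeness of $H$ lets one define $h_{i}=\sup\{P_{r_{i}}(h):h\in D\}$; these $h_{i}$ are pairwise disjoint, lie in $D$, and satisfy $x=\sup_{i}h_{i}$. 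Only then does lateral completeness of $F$ produce $s=\sup_{i}T(h_{i})$, and the fuzzy order continuity of $T$ on $H$ (which, note, is a hypothesis of the theorem, not a consequence of Dedekind completeness as your parenthesis suggests) shows $T(h)=T(h)\wedge s$ for all $h\in D$, so $s=\sup T(D)$. Two smaller problems in your later stages: $D_{x+y}=D_{x}+D_{y}$ fails in general because the Riesz decomposition of $h\leq x+y$ need not stay inside $H$, and the final order-continuity step cannot simply "pass to the supremum over $h$" without the band-component machinery above. You would need to rebuild the existence argument along the paper's lines before the remaining stages can be carried out.
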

\begin{proof}Pick $x\in E^{+}$, consider the nonempty set $D=\{h\in H^{+}:\mu(y,x)>{\frac{1}{2}}\}$. It suffices to show the existence of $\sup T(D)$. First, we need to show for each $r\in H^{+}$, there exists some $k\in H^{+}$ with $\mu(k,r)>{\frac{1}{2}}$ and a positive integer $n$ satisfying $\mu(P_{k}(h),nk)>{\frac{1}{2}}$ for all $h\in D$, where $P_{k}$ is the fuzzy order projection of $H$ onto the fuzzy band generated by $k$ in $H$. Since $E$ is fuzzy Archimedean, there exists some integer $n$ with $\mu(0,(nr-x)^{+})>{\frac{1}{2}}$. Pick $g\in H^{+}$ with $\mu(g,(nr-x)^{+})>{\frac{1}{2}}$, and note that $g\perp(h-nr)^{+}$ holds for all $h\in D$. From $\mu(g,nr)>{\frac{1}{2}}$, we see that $\mu(0,g\wedge r)>{\frac{1}{2}}$, and so $k=P_{g}(r)\in H$ satisfies $\mu(k, r)>{\frac{1}{2}}$. Now for each $h\in D$ the relation $[P_{g}(h)-nP_{g}(r)]^{+}=[P_{g}(h)-nk]^{+}=P_{g}(h-nr)^{+}\in B_{g}\cap B_{g}^{d}={0}$ implies that $[P_{g}(h)-nk]^{+}=0$, and hence $\mu(P_{g}(h), nk)>{\frac{1}{2}}$ holds for all $h\in D$. Since $P_{k}(h)=\sup\{h\wedge ik:i=1,2,\cdots\}\in B_{g}$, it follows that $P_{k}(h)=P_{g}(P_{k}(h))=P_{k}(P_{g}(h))$, and $\mu((P_{k}(P_{g}(h)), nP_{k}(k)=nk)>{\frac{1}{2}}$, this implies that $\mu(P_{k}(h),nk)>{\frac{1}{2}}$ holds for all $h\in D$.

Next, Zorn'n lemma, together with the fact that $H$ is fuzzy order denseness in $E$ imply that there exists a maximal disjoint system of strictly positive elements $\{k_{i}:i\in I\}$ and a positive integer $n_{i}$, satisfying $\mu(P_{k_{i}}(h), n_{i}k_{i})>{\frac{1}{2}}$ for all $h\in D$. If $r_{i}=n_{i}k_{i}$, then $\{r_{i}:i\in I\}$ is the maximal disjoint system of strictly positive elements of $H$ such that $\mu(P_{r_{i}}(y), r_{i})>{\frac{1}{2}}$ holds for all $i\in I$ and all $h\in D$.

Next, we need to show there exists a pairwise disjoint set $\{h_{i}:i\in I\}\subseteq D$ satisfying $x=\sup(h_{i})$ in $E$. Since $H$ is fuzzy Dedekind complete and $\mu(P_{r_{i}}(h), r_{i})>{\frac{1}{2}}$ holds for all $h\in D$, it follows that $h_{i}=\sup\{P_{r_{i}}(h):h\in D\}$ exists in $H$ satisfying $\mu(h_{i}, r_{i})>{\frac{1}{2}}$. Now let $h\in D$, $k\in H^{+}$, we have $\mu(P_{r_{i}}(h), h-k)>{\frac{1}{2}}$ for all $i$, then $\mu(k, h-P_{r_{i}}(y))>{\frac{1}{2}}$ with $h-P_{r_{i}}(h)\in B_{r_{i}}^{d}$ hold for each $i$, so $k\wedge r_{i}=0$ for all $i$. Since $\{r_{i}:i\in I\}$ is a maximal disjoint system, it follows that $k=0$, therefore $h=\sup\{P_{r_{i}}(h):i\in I\}$ holds in $G$ for each $h\in D$.

Note that for each $h\in D$, we have $\mu(P_{r_{i}}(h), h)>{\frac{1}{2}}$, $\mu(h, x)>{\frac{1}{2}}$, hence $\mu(h_{i}, x)>{\frac{1}{2}}$. On the other hand, if $\mu(h_{i}, k)>{\frac{1}{2}}$, then $\mu(P_{r_{i}}(h), k)>{\frac{1}{2}}$ holds for all $h\in D$, it follows that $\mu(\sup\{P_{r_{i}}(h):i\in I\}, k)>{\frac{1}{2}}$ for all $h\in D$. Therefore, $\mu(x, k)>{\frac{1}{2}}$, hence $x=\sup\{h_{i}\}$ holds in $E$.

Finally, we will show that if $\{h_{i}:i\in I\}$ is a pairwise disjoint subset of $D$ with $x=\sup\{h_{i}\}$, then $\sup T(D)=\sup\{T(h_{i})\}$ holds in $F$.

Since $T:H\rightarrow F$ is a fuzzy lattice homomorphism, $\{T(h_{i}):i\in I\}$ is a pairwise disjoint subset of $F^{+}$. Since $F$ is a fuzzy Archimedean laterally complete Riesz space, the element $s=\sup\{T(h_{i}):i\in I\}$ exists in $F$. Let $h\in D$, since $T$ is a fuzzy order continuous lattice homomorphism, it follows that $h=h\wedge x=h\wedge\sup\{h_{i}:i\in I\}=\sup\{h\wedge h_{i}:i\in I\}$, we have $T(h)=\sup\{T(h\wedge h_{i}):i\in I\}=\sup\{T(h)\wedge T(h_{i}):i\in I\}=T(h)\wedge \sup\{T(h_{i}):i\in I\}=T(h)\wedge s$, this implies $\nu(T(h),s)>{\frac{1}{2}}$. Thus, $s=\sup T(D)$.\ \ \ \ $\Box$\end{proof}

\section{Factorization by fuzzy Riesz homomorphisms}

\begin{theorem}Let $(E,\mu)$, $(F,\nu)$, $(H,\omega)$ be fuzzy Riesz spaces with $F$ fuzzy Dedekind complete, $Q:E\rightarrow H$ be a fuzzy lattice homomorphism, $S:H\rightarrow F$ be a fuzzy positive linear operator, then every fuzzy positive linear $T:E\rightarrow F$ satisfies $T\leq S\circ Q$ admits a factorization $T=S_{1}\circ Q$ where $S_{1}:H\rightarrow F$ is linear and $0\leq S_{1}\leq S$.
\end{theorem}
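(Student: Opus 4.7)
The plan is to define $S_1$ on the range $Q(E)\subseteq H$ by the natural assignment $S_1(Qx):=Tx$ and then extend it to all of $H$ via a Hahn--Banach--Kantorovich type argument. The structural fact making the definition unambiguous is the inclusion $Ker(Q)\subseteq Ker(T)$. If $Qx=0$, then since $Q$ is a fuzzy lattice homomorphism, $Q|x|=|Qx|=0$, and the hypothesis $0\leq T\leq S\circ Q$ together with fuzzy positivity of $S$ gives $0\leq T|x|\leq S(Q|x|)=0$. Hence $T|x|=0$, which since $T|x|=T(x^+)+T(x^-)$ with both summands nonnegative forces $T(x^+)=T(x^-)=0$ and so $Tx=0$. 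Therefore $S_1(Qx):=Tx$ is a well-defined linear map on the vector subspace $Q(E)$ satisfying $T=S_1\circ Q$ by construction.

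Next I would verify $0\leq S_1\leq S$ on $Q(E)^+$. Take $h=Qx\geq 0$; because $Q$ preserves lattice operations, $Q(x^-)=(Qx)^-=0$, so $x^-\in Ker(Q)\subseteq Ker(T)$ and $T(x^-)=0$. Hence $S_1(h)=Tx=T(x^+)\geq 0$ and $S_1(h)=T(x^+)\leq S(Q(x^+))=S(h)$. Thus on the subspace $Q(E)$, $S_1$ is a linear operator squeezed between $0$ and the restriction of $S$.

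The remaining task is to extend $S_1$ from $Q(E)$ to all of $H$ while preserving the sandwich property. I would introduce the sublinear majorant $p:H\to F$ by $p(h):=S(h^+)$, which is positively homogeneous and subadditive (since $S$ is fuzzy positive linear and $(h_1+h_2)^+\leq h_1^++h_2^+$), and which dominates $S_1$ on $Q(E)$ (indeed $S_1(Qx)=T(x^+)-T(x^-)\leq T(x^+)\leq S(Q(x^+))=p(Qx)$). The Hahn--Banach--Kantorovich extension theorem for operators with values in a Dedekind complete Riesz space then produces a linear extension $S_1:H\to F$ still dominated by $p$. From $-S_1(h)=S_1(-h)\leq p(-h)=S(h^-)$ together with $p(h)=S(h)$ when $h\geq 0$, one recovers $0\leq S_1(h)\leq S(h)$ for all $h\in H^+$, i.e.\ $0\leq S_1\leq S$.

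The main obstacle is precisely this last dominated extension step: one must have at hand a fuzzy version of the Kantorovich extension theorem, and the fuzzy Dedekind completeness hypothesis on $F$ is what makes the customary one-direction-at-a-time Zorn's lemma construction go through, since each stage requires choosing a value in $F$ lying between a supremum and an infimum of expressions built from $p$ and the values already defined, and those suprema and infima must exist in $F$. Everything else in the proof is bookkeeping with the fact that $Q$ preserves the lattice operations and the definitions of fuzzy positive and fuzzy linear.
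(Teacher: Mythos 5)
Your proposal is correct and follows essentially the same route as the paper: define $S_{0}(Qx)=Tx$ on $Q(E)$, check well-definedness via $Ker(Q)\subseteq Ker(T)$ using $|Tx|\leq T|x|\leq S(Q|x|)=S(|Qx|)=0$, and extend by a fuzzy Hahn--Banach--Kantorovich argument with the sublinear majorant $p(y)=S(y^{+})$, deriving $0\leq S_{1}\leq S$ from $S_{1}(-y)\leq p(-y)=S((-y)^{+})=0$ and $S_{1}(y)\leq p(y)=S(y)$ for $y\in H^{+}$ exactly as the paper does (which cites the extension step as Theorem 2.5 of [12]). If anything you are slightly more careful, since you explicitly verify that $S_{0}$ is dominated by $p$ on $Q(E)$ before invoking the extension theorem, a hypothesis the paper leaves implicit.
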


\begin{proof}Let $H_{1}=QE$, then $H_{1}$ is a fuzzy sublattice of $H$. Define $S_{0}:H_{1}\rightarrow F$ by $S_{0}(Qx)=Tx$ for $x\in E$. Next, suppose $Qx=0$, as $\nu(|Tx|,T|x|)>{\frac{1}{2}}$, $\nu(T|x|,SQ(|x|))>{\frac{1}{2}}$, and $SQ(|x|)=S(|Qx|)=0$, we have $Tx=0$. Thus, $S_{0}$ is well defined, and is obviously a linear mapping.

Let $p:H\rightarrow F$ be defined by $p(y)=Sy^{+}$. Suppose $y_{1}$, $y_{2}\in H$, as $\omega((y_{1}+y_{2})^{+},(y_{1}^{+}+y_{2}^{+}))>{\frac{1}{2}}$, it follows that $\nu(S(y_{1}+y_{2})^{+},(Sy_{1}^{+}+Sy_{2}^{+}))>{\frac{1}{2}}$, we have $\nu(p(y_{1}+y_{2}),p(y_{1})+p(y_{2}))>{\frac{1}{2}}$. Moreover, let $y\in H$, $\lambda\in R$, $\lambda\geq0$, we have $p(\lambda y)=S(\lambda y)^{+}=\lambda Sy^{+}=\lambda p(y)$. Therefore, $p$ is fuzzy sublinear. By Theorem 2.5 of [12], $S_{0}$ has a linear extension $S_{1}:H\rightarrow F$ satisfies $\nu(S_{1},p(y))>{\frac{1}{2}}$ for $y\in H$. The definition of $S_{0}$ implies that $S_{1}(Qx)=S_{0}(Qx)=Tx$ for $x\in E$. Moreover, let $y\in H^{+}$, in view of $-S_{1}y=S_{1}(-y)$, $p(-y)=S(-y)^{+}$, $\nu(S_{1}(-y),p(-y))>{\frac{1}{2}}$, we have $\nu(0,S_{1}(y))>{\frac{1}{2}}$, showing $S_{1}$ is fuzzy positive. Finally, let $y\in H^{+}$, since $\nu(S_{1}(y),p(y)=S(y))>{\frac{1}{2}}$, it follows that $S_{1}\leq S$. Consequently, $S_{1}$ has the desired properties.\ \ \ \ $\Box$\end{proof}

 REFERENCES

[1]Zadeh, L.A. ,Fuzzy Sets. Information and Control ,1965, 8, 338每353.

[2]Zadeh, L.A.ㄛ Similarity relations and fuzzy orderingㄛInformation Sciences, 1971, 177每200.

[3]Venugopalan, P. Fuzzy Ordered Sets. Fuzzy Sets and systems, 1992, 46, 221每226.

[4]Beg, I.,Islam, M., Fuzzy Riesz Spaces, J. Fuzzy Math.,1994, 2,211每241.

[5]Beg, I., Islam, M., Fuzzy ordered linear spaces. J. Fuzzy Math. 1995, 3, 659每670.

[6]Beg, I.,  $\sigma$-complete fuzzy Riesz spaces. Results in Mathematics,1997, 31, 292-299.

[7]Beg, I., Islam, M. , Fuzzy Archimedean spaces, J. Fuzzy Math.,1997, 5, 413每423.

[8]Hong, L., Fuzzy Riesz subspaces, fuzzy ideals, fuzzy bands and fuzzy band projections,
Ann. West Univ. Timis.Math. Comput. Sci. , 2015, 53, 77每108.

[9]M. Iqbal, Z. Bashir., The existence of fuzzy Dedekind completion of Archimedean fuzzy Riesz space. Computational and Applied Mathematics, 2020, 39, 116.

[10]Beg, I.,Extension of fuzzy positive linear operators,J. Fuzzy Math.,1998, 4, 849-855.

[11]M. Iqbal, Z. Bashir., A study on fuzzy order bounded linear operators in fuzzy Riesz spaces.

[12]Na Cheng, Xiao Liu, Juan Dai, Extension of fuzzy linear operators on fuzzy Riesz spaces, submit.

[13] Sesma-Sara, M ; Mesiar, R  ; Bustince, H ㄛWeak and directional monotonicity of functions on Riesz spaces to fuse uncertain data,Fuzzy Sets and Systems, 386,145-160, 2020.

[14] Mosadegh, SMSM  ; Movahednia, E , Stability of preserving lattice cubic functional equation in Menger probabilistic normed Riesz spaces,
 20(1), 2018.

[15]Sheng-Gang LiㄛOn two weaker forms of continuous order-homomorphisms,Fuzzy Sets and Systems ,101 , 469 -475,1999.

[16]S.L. Chen, Several order-homomorphisms on L-fuzzy topological spaces, J. Shaanxi Normal Univ. 16 (3),15-19, 1988.

[17]S.L. Chen, J.S. Cheng, The characterizations of semi-continuous and irresolute order-homomorphisms of fuzzes,Fuzzy Sets and Systems, 64 , 105- 112,1994.

[18]G.J. Wang, Order-homomorphisms on Fuzzes, Fuzzy Sets and Systems, 12 ,281-288,1984.

[19] G.J. Wang, Theory of topological molecular lattices,Fuzzy sets and Systems ,47 ,351-376,1992.

[20] Wojciech Bielas, Aleksander B?aszczyk,Topological representation of lattice homomorphisms,Topology and its Applications 196 , 362每378, 2015.

[21]J.W.Grzymala-Busse, W.A.Sedelow, on rough sets and information system homomorphism, Bull. pol.Acad.Technol.Sci.,36,233-239, 1988.

[22] D.Y. Li, Y.C. Ma, Invariant characters of information systems under some homomorphisms, Information Sciences ,129 , 211-220, 2000.

[23]C. Wang, C. Wu, D. Chen, W. Du, Some properties of relation information systems under homomorphisms, Applied Mathematics Letters, 21, 940-945, 2008.
[24] C. Wang, C. Wu, D. Chen, Q. Hu, C. Wu, Communication between information systems, Information Sciences ,178 , 3228-3239, 2008.

[25] Hongxiang Tang, Zhaowen Li,Invariant characterizations of fuzzy information systems under some homomorphisms based on data compression and related results,Fuzzy Sets and Systems,376 ,37-72, 2019.

[26]C.Z.Wang, D.G.Chen,Q.H.Hu, fuzzy informaion system and their homomorphisms, Fuzzy Sets and Systems, 249, 128-138, 2014.

[27]T.Bag, S.K.Samanta, Finite dimensional fuzzy normed linear spaces, J.Fuzzy Math.,11(3), 687-705, 2003.

[28]T.Bag, S.K.Samanta,Fuzzy bounded linear operators, Fuzzy sets and Systems, 151, 513-547, 2005.

[29]Clementina Felbin, Finite dimensional fuzzy normed linear spaces,Fuzzy sets and Systems,48(2), 239-248, 1992.

[30]Keun Young Lee, Approximation properties in fuzzy normed spaces,282,115-130, 2016.

\vspace{0.5cm}

\end{document}